\renewcommand{\v}[1]{{\bf #1}}			
\newcommand{\vd}[1]{\dot{\bf #1}}
\newcommand{\f}[2]{{\bf #1}_{#2}}
\newcommand{\ff}[2]{{#1}_{#2}}
\newcommand{\sect}[1]{Sec.~\ref{#1}}
\newcommand{\eps}{\varepsilon}
\newcommand{\cO}{\mathcal{O}}
\newcommand{\sh}{\sigma}
\newcommand{\step}{\operatorname{step}}
\definecolor{thirdIterate}{rgb}{.15,.8,.15}
\newtheorem{theorem}{Theorem}[section]
\newtheorem{proposition}[theorem]{Proposition}
\begin{document}



\title{Jitter in Dynamical Systems with Intersecting Discontinuity Surfaces}
\author{
M.R.~Jeffrey$^{\dagger}$, G.~Kafanas$^{\dagger}$, and D.J.W.~Simpson$^{\ddagger}$\\
\small
$^{\dagger}$Department of Engineering Mathematics, University of Bristol, Bristol, UK\\
\small
$^{\ddagger}$Institute of Fundamental Sciences, Massey University, Palmerston North, New Zealand
}
\maketitle

\begin{abstract}


Mathematical models involving switches --- in the form of differential equations with discontinuities ---
can accomodate real-world non-idealities through perturbations by hysteresis, time-delay, discretization, and noise.
These are used to model the processes associated with switching in
electronic control, mechanical contact, predator-prey preferences, and genetic or cellular regulation.
The effect of such perturbations on rapid switching dynamics about a single switch are somewhat benign:
in the limit that the size of the perturbation goes to zero the dynamics is given by Filippov's sliding solution.
When multiple switches are involved, however, perturbations can have complicated effects, as shown in this paper.
In the zero-perturbation limit, hysteresis, time-delay, and discretization
cause erratic variation or `jitter' for stable sliding motion,
whilst noise generates a relatively regular sliding solution similar to the canopy solution
(an extension of Filippov's solution to multiple switches).
We illustrate the results with a model of a switched power circuit, 
and showcase a variety of complex phenomena that perturbations can generate,
including chaotic dynamics, exit selection, and coexisting sliding solutions.

\end{abstract}


\section{Introduction}
\label{sec:intro}
\setcounter{equation}{0}



For systems of differential equations that are piecewise-smooth --- smooth except for discontinuities on isolated surfaces --- orbits
may cross discontinuity surfaces or slide along them.
Sliding motion models frictional sticking in mechanics \cite{OeHi96}, for example,
and is used to achieve robust control via switched feedback mechanisms in electronics \cite{TaLa12,Su06}.
The definition of sliding involves certain rules and assumptions that are not part of the standard theory of dynamical systems, and which remain incompletely understood.

Filippov \cite{Fi88} showed that sliding motion can be defined by taking a convex combination of the vector field across its discontinuities.
For a single discontinuity surface this gives a unique solution,
but along the intersection of two or more discontinuity surfaces this gives a family of solutions. 
Alternatively, sliding motion can be defined by perturbing the system in a manner that generates a classical solution,
and then taking the limit as the size of the perturbation goes to zero.
For a single discontinuity surface this gives a unique solution (equivalent to Filippov's solution) for many types of perturbation.
The purpose of this paper is to explore the nature of sliding motion defined in this way for intersecting discontinuity surfaces.
Our main result is that the sliding motion may be uniquely defined but highly sensitive to variations in the vector field, causing `jitter' in the sliding solutions.

Consider an $n$-dimensional system
\begin{equation}
\vd x = \v f(\v x) \;,
\label{eq:ns}
\end{equation}
whose right-hand side $\v f:\mathbb{R}^n \to \mathbb{R}^n$ is smooth
except on discontinuity surfaces $\Sigma_1,\ldots,\Sigma_m$.
We assume that each $\Sigma_i$ divides $\mathbb{R}^n$ into two pieces,
hence $\Sigma = \Sigma_1 \cup \cdots \cup \Sigma_m$ divides $\mathbb{R}^n$
into $2^m$ open regions ${\cal R}_1, \ldots, {\cal R}_{2^m}$,
some of which may be empty or disconnected.
We then write
\begin{equation}
\v f(\v x) = \f fi(\v x) {\rm ~~for~} \v x \in {\cal R}_i \;, \qquad i=1,...,2^m,
\label{eq:nsf}
\end{equation}
where $\f f1,\ldots \f f{2^m}$ are independent smooth vector fields.

Sliding solutions to (\ref{eq:ns}) are those that evolve on at
least one discontinuity surface $\Sigma_i$ for a sustained interval of time.
About a single discontinuity surface, say $\Sigma_1$ bounding ${\cal R}_1$ and ${\cal R}_2$, the convex combination
\begin{equation}
\v f_{\rm comb}(\v x,\lambda) = \lambda \f f1(\v x) + (1-\lambda) \f f2(\v x) \;, \qquad \lambda \in [0,1] \;,
\label{eq:convexCombination}
\end{equation}
describes the convex hull of values that $\vd x$ can explore by evolving in
arbitrarily small increments of $\vd x = \f f1(\v x)$ and $\vd x = \f f2(\v x)$. 
The switching multiplier $\lambda$ can be interpreted as the proportion of time spent following $\f f1$,
and $1-\lambda$ as the proportion of time spent following $\f f2$
(this relates to the {\em duty cycle} in electronics).
Filippov's solution is found by solving for the value of $\lambda$ that gives $\v f_{\rm comb}$ tangent to $\Sigma_1$.
Letting $\sh(\v x) = 0$ represent $\Sigma_1$,
where $\sh : \mathbb{R}^n \to \mathbb{R}$ is a continuous function,
the tangency condition is $\v f(\v x,\lambda) \cdot \nabla \sh(\v x) = 0$.
Filippov's solution therefore satisfies $\vd x = \v f(\v x,\lambda_{\rm slide})$ where
$\lambda_{\rm slide} = \frac{\f f2 \cdot \nabla \sh}{(\f f2 - \f f1) \cdot \nabla \sh}$.
Recent generalizations have extended Filippov's method to permit non-convex combinations \cite{Je14d}.


Such solution methods assume an ideal model of switching.
We may choose to add more complexity to the formulation of the discontinuity for two main reasons.
First, in numerical simulations or from a theoretical viewpoint
we may seek to `regularise' (\ref{eq:ns}) on $\Sigma$
to resolve ambiguities in evolution resulting from the discontinuity.
Second, we may use alternate switching formulations in order to model complex physical processes,
such as an electronic switch \cite{UtGu99},
a change in the direction of Coulomb friction \cite{WoSt08,OlAs98},
a discontinuous change of the surface reflectivity of the Earth \cite{KaEn13},
predator switching between prey sources \cite{KuRi03,PiPo14},
phase transition of a superconductor at its critical temperature \cite{mach,JeCh10}, and so on.
A major challenge for the theory of piecewise-smooth systems is to understand the robustness
of (\ref{eq:ns}) to different kinds of regularisation.


There are a handful of regularisations that are obvious to consider.
Here we describe these for a single discontinuity surface $\Sigma_1$ between regions ${\cal R}_1$ and ${\cal R}_2$; the extension to multiple switches follows naturally.
We suppose $\Sigma_1$ is defined by $\sh(\v x) = 0$,
with $\vd x=\f f1 (\v x)$ for $\sh(\v x)>0$, and $\vd x=\f f2 (\v x)$ for $\sh(\v x)<0$. The regularisations we consider are:
\begin{enumerate}
\renewcommand{\labelenumi}{\roman	{enumi})}
\item
{\em smoothing}:
the switch between $\f f1$ and $\f f2$ is modulated by a continuous function $\psi : \mathbb{R} \to \mathbb{R}$
with $\lim_{\eps \to 0} \psi(\sh/\eps) = {\rm sgn}(\sh)$ for $\sh \neq 0$; specifically we consider
$\vd x = \psi(\sh(\v x)/\eps)\,\f f1(\v x) +
\left(1 - \psi(\sh(\v x)/\eps)\right) \,\f f2$;
\item
{\em hysteresis}:
a switch from $\f f1$ to $\f f2$ as $\sh$ decreases occurs when $\sh(\v x) = -\eps$,
while a switch from $\f f2$ to $\f f1$ as $\sh$ increases occurs when $\sh(\v x) = \eps$;
\item
{\em time-delay}:
a switch occurs at a time $\eps$ after crossing $\Sigma_1$;
\item
{\em discretization}:
the system is discretized, say via the forward Euler numerical scheme: $\v x_{k+1} = \v x_k + \eps \v f(\v x_k)$.
\item
{\em noise}:
the ordinary differential equation $\vd x=\v f(\v x)$ is converted to a stochastic differential equation
via the addition of order $\eps$ noise,
or the switching function $\sh$ is perturbed to $\sh(\v x) + \eps \xi(t)$, where $\xi(t)$ is a stochastic process.
\end{enumerate}


The effect of smoothing a discontinuity is largely understood and is in the main rather simple.
Typically (if the system is non-degenerate and absent of certain singularities)
the smooth system $\vd x = \v f(\v x;\eps)$ is equivalent
to the discontinuous system up to order $\eps$ (or order $\eps^p$ where $p>0$ for some singularities) \cite{SoTe96,TeDa12,NoJe15}. 


For an attracting section of a single discontinuity surface,
the effects of the remaining types of perturbation are also straight-forward.
With hysteresis, time-delay, or discretization by forward Euler,
one obtains `chattering' solutions that tend to Filippov's solution in the
zero-perturbation limit ($\eps \to 0$) \cite{Fi88,UtGu99}.
Similarly additive white noise perturbations yield stochastic solutions
that tend to Filippov's solution as $\eps \to 0$ \cite{BuOu09,SiKu14b}.
Near singularities the situation can be more involved --- discretizing
near grazing points in the presence of a limit cycle leads to micro-chaos \cite{GlKo10b},
and regularisations of two-fold singularities leads to a probabilistic notion of forward evolution \cite{Si14c,Je16}.
In general, away from bifurcation points and determinacy-breaking singularities, 
the dynamics about a single discontinuity surface is robust to perturbations.



The remainder of this paper is organized as follows.
We begin in \sect{sub:intDiscSurf} by introducing a general three-dimensional system with two discontinuity surfaces.
We work with this system throughout the paper as it involves 
the fewest number of dimensions and discontinuity surfaces
needed to exhibit evolution along intersecting discontinuity surfaces.
In \sect{sub:hullCanopy} we define the convex hull and the canopy solution.
In \sect{sub:zeroPertLimit} we describe the general procedure by which
sliding motion is defined through a zero-perturbation limit for hysteresis, time-delay, discretization, and noise.
In \sect{sub:simulation} we present the results of numerical simulations for each of these types of perturbation,
illustrating the wild variations typical in the sliding vector field
and that are responsible for jittery sliding solutions.

In \sect{sec:hysteresis} we study hysteresis --- the best understood perturbation after smoothing.
We first review the results of Alexander and Seidman \cite{AlSe99}
on the existence and uniqueness of the zero-hysteresis sliding solution in the most basic scenario
when all four components of the vector field are directed inwards to the intersection of the discontinuity surfaces, \sect{sub:chatterbox}.
This solution is determined by the hysteretic dynamics of a two-dimensional piecewise-constant vector field within a rectangle $\Omega$.
We show how the dynamcis within $\Omega$ is captured by a continuous, invertible, degree-one circle map, \sect{sub:circleMap}.
We then study mode-locking regions and the dependence of the zero-hysteresis sliding solution
on the relative sizes of the hysteresis bands about the two discontinuity surfaces, \sect{sub:zeroHystSoln}.
Finally we look at three pedagogic scenarios for which at least one vector field component
is not directed inwards, Secs.~\ref{sub:chaos}--\ref{sub:exitSelection}. 

In \sect{sec:other} we study the remaining three types of pertubation.
Our aim is to briefly illustrate the typical nature of sliding solutions defined by using these perturbations.
We highlight a range of complications that do not arise in the case of hysteresis.
A complete analysis and understanding is beyond the scope of this paper.

In \sect{sec:eg} we illustrate jitter in an electronic circuit.
The circuit is well-modelled as a Filippov system with two intersecting discontinuity surfaces,
but, more realistically, rapid switching occurs within two bands of hysteresis.
We show how the dynamics are well explained via the zero-hysteresis sliding solution.

Lastly we provide closing remarks in \sect{sec:conc}.

\section{The general problem and the concept of jitter}
\label{sec:general}
\setcounter{equation}{0}



\subsection{Intersecting discontinuity surfaces}
\label{sub:intDiscSurf}

Consider a system $\vd x = \v f (\v x)$ with state vector
$\v x = (x,y,z) \in \mathbb{R}^3$
and discontinuity surfaces $x = 0$ and $y = 0$, as in Fig.~\ref{fig:schemIntSwMan}.
We write
\begin{equation}
\vd x = \begin{cases}
\f f1(\v x) \;, & (x,y) \in {\cal Q}_1 \\[-1.mm]
\f f2(\v x) \;, & (x,y) \in {\cal Q}_2 \\[-1.mm]
\f f3(\v x) \;, & (x,y) \in {\cal Q}_3 \\[-1.mm]
\f f4(\v x) \;, & (x,y) \in {\cal Q}_4
\end{cases} \;,
\label{eq:f}
\end{equation}
where each $\f fi$ is smooth and
${\cal Q}_i$ denotes the $i^{\rm th}$ quadrant of the $(x,y)$-plane:
\begin{equation}
\begin{split}
{\cal Q}_1 &= \{ (x,y) ~|~ x>0, y>0 \} \;, \\
{\cal Q}_2 &= \{ (x,y) ~|~ x<0, y>0 \} \;, \\
{\cal Q}_3 &= \{ (x,y) ~|~ x<0, y<0 \} \;, \\
{\cal Q}_4 &= \{ (x,y) ~|~ x>0, y<0 \} \;.
\end{split}
\nonumber
\end{equation}
While an orbit $\v x(t)$ is following $\f fi$, we say that $\v x(t)$ is in {\em mode} $i$.
This particular definition extends nicely to
the perturbations of (\ref{eq:f}) considered below.
We also write $\f fi = (\ff fi,\ff gi,\ff hi)$ to indicate the components of $\f fi$.

The two discontinuity surfaces intersect along the $z$-axis which we denote by $\Gamma$.
At any $(0,0,z) \in \Gamma$, we say that $\f fi$ is {\em directed inwards}
if $(-\ff fi(0,0,z),-\ff gi(0,0,z)) \in {\cal Q}_i$. 
Any section of $\Gamma$ along which each of the $\f fi$ are directed inwards is {\em attracting}
in the sense that all nearby orbits, both regular and sliding, approach the $z$-axis in forward time.
Note that sections of $\Gamma$ can be attracting if one, or even all, of the $\f fi$ are not directed inwards.

If $\Gamma$ is attracting, we expect motion to ensue on $\Gamma$.
To this end we search for a way to define such {\em sliding} motion. 

\begin{figure}[b!]
\begin{center}
\setlength{\unitlength}{1cm}
\begin{picture}(10,5)
\put(0,0){\includegraphics[height=5cm]{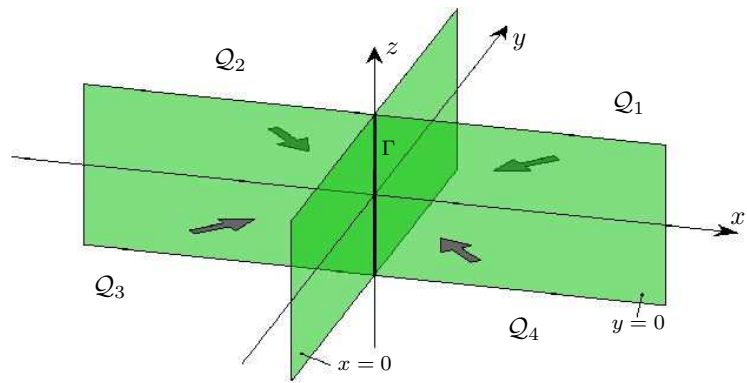}}
\put(9.68,2.12){\footnotesize $x$}
\put(6.78,4.5){\footnotesize $y$}
\put(5.09,4.38){\footnotesize $z$}
\put(8.1,3.63){\footnotesize ${\cal Q}_1$}
\put(2.8,4.2){\footnotesize ${\cal Q}_2$}
\put(1.2,1.18){\footnotesize ${\cal Q}_3$}
\put(6.7,.62){\footnotesize ${\cal Q}_4$}
\put(4.46,.18){\scriptsize $x=0$}
\put(8.1,.72){\scriptsize $y=0$}
\put(5.04,3.03){\scriptsize $\Gamma$}
\end{picture}
\caption{
A schematic of the phase space of three-dimensional piecewise-smooth system (\ref{eq:f}).
The coordinate planes $x=0$ and $y=0$ are discontinuity surfaces on which (\ref{eq:f}) is discontinuous.
\label{fig:schemIntSwMan}
} 
\end{center}
\end{figure}

\subsection{The convex hull and the canopy solution}
\label{sub:hullCanopy}

The convex hull of the components of (\ref{eq:f}) is set of all
\begin{equation}
\f f{\rm hull} = \sum_{i=1}^4 \gamma_i \f fi \;\qquad\mbox{where}\quad0\le\gamma_i\le1\quad\mbox{for each }i\;,
\label{eq:convexHull}
\end{equation}
subject to $\sum_{i=1}^4 \gamma_i = 1$.
Each multiplier $\gamma_i$ can be interpreted as the proportion of time spent in mode $i$.
When we seek sliding motion along $\Gamma$
we require that $\v f_{\rm hull}$ is tangent to the switching surfaces $x = 0$ and $y = 0$, and hence directs the motion to `slide' along their intersection.
This gives two additional restrictions on the $\gamma_i$
leading to a one parameter family of sliding solutions.
 
This ambiguity for two or more switches is well known \cite{Fi88,AlSe99,lopez}
and the most direct attempt to resolve it is the canopy combination \cite{AlSe98,Je14c}.
Alternatives include the moments solution \cite{Di15,DiDi15}
which selects differentiable orbits for the purposes of computation.
The canopy arises from the hull as follows. 

Let
\begin{equation}
\lambda_{14} = \gamma_1 + \gamma_4 \;, \qquad
\lambda_{12} = \gamma_1 + \gamma_2 \;,
\end{equation}
represent the proportion of time spent in $x > 0$ and $y > 0$ respectively.
If we treat the proportions $\gamma_i$ as independent probabilities, then
\begin{equation}\label{indep}
\gamma_1 = \lambda_{14} \lambda_{12} \;,
\end{equation}
which removes the ambiguity by providing an additional restriction on the values of the $\gamma_i$.
By using this to write each of the $\gamma_i$ in terms of $\lambda_{14}$ and $\lambda_{12}$ we obtain the canopy vector field
\begin{equation}
\f f{\rm canopy} =
\lambda_{14} \lambda_{12} \f f1 +
(1-\lambda_{14}) \lambda_{12} \f f2 +
(1-\lambda_{14}) (1-\lambda_{12}) \f f3 +
\lambda_{14} (1-\lambda_{12}) \f f4 \;,
\label{eq:canopy}
\end{equation}
where $\lambda_{14}$ and $\lambda_{12}$ are chosen such that $\f f{\rm canopy}$ is tangent to $x = 0$ and $y = 0$.
In (\ref{eq:canopy}) these two conditions give well-defined sliding solutions along $\Gamma$, with $\lambda_{12}$ and $\lambda_{14}$ fully determined.
If (\ref{indep}) does not hold then $\lambda_{12}$ and $\lambda_{14}$ are not fully determined by sliding motion. 



\subsection{Defining sliding motion through perturbations}
\label{sub:zeroPertLimit}

To define sliding motion on $\Gamma$ we consider perturbations
of (\ref{eq:f}) 
of size $\alpha$ in the $x$-direction and size $\beta$ in the $y$-direction.
We write these as
\begin{equation}
\alpha = \eps \cos \left( \frac{\pi \phi}{2} \right) \;, \qquad
\beta = \eps \sin \left( \frac{\pi \phi}{2} \right) \;,
\label{eq:alphabeta}
\end{equation} 
where $0 < \eps \ll 1$ and $0 < \phi < 1$ govern the overall size and ratio of the perturbations respectively.
The perturbations are defined precisely in later sections.
Here we explain how these perturbations are used to define sliding motion on $\Gamma$.


Orbits of a pertubation of (\ref{eq:f})
evolve in $\cO(\eps)$ (order $\eps$) neighbourhoods of attracting sections of $\Gamma$
(with the exception of the stochastic perturbation for which the probability of evolution far from $\Gamma$ vanishes as $\eps \to 0$).
The perturbed orbits switch modes frequently:
the difference between consecutive switching times is $\cO(\eps)$.
By taking the limit $\eps \to 0$, the rapid switching dynamics
can be averaged to define an effective sliding motion along $\Gamma$.

To see how this is achieved,
let $\v x(t)$ be an orbit of a pertubation of (\ref{eq:f}),
and suppose for simplicity that $\v x(0) = (0,0,z_0)$ for some $z_0 \in \mathbb{R}$.
Throughout a time interval $[0,T]$, where $\eps \ll T \ll 1$,
we have $\v f(\v x(t)) = \v f(\v x(0)) + \cO(T)$ (except in the stochastic perurbation for which this is false with vanishingly small probability).
Using this approximation, the identity
\begin{equation}
\v x(T) = \v x(0) + \int_0^T \v f(\v x(t)) \,dt \;,
\nonumber
\end{equation}
gives us
\begin{equation}
\v x(T) = \v x(0) + \sum_{i=1}^4 \gamma_i(z_0,\eps) T \left[ \f fi(\v x(0)) + \cO(T) \right] \;,
\label{eq:bxt}
\end{equation}
where $\gamma_i(z_0,\eps)$ is the fraction of $[0,T]$ for which $\v x(t)$ is in mode $i$, for $i = 1,\ldots,4$.
By using the limiting values $\gamma_i(z_0) = \lim_{\eps \to 0} \gamma_i(z_0,\eps)$
we can write
\begin{equation}
\v x(T) = \v x(0) + \f f{\rm slide}(z_0) T + \cO \left( T^2 \right) \;,
\label{eq:explicitAsymptoticSoln}
\end{equation}
where
\begin{equation}
\f f{\rm slide}(z) = \sum_{i=1}^4 \gamma_i(z) \f fi(0,0,z) \;.
\label{eq:fslide}
\end{equation}
Since $\v x(t)$ is by assumption contained in an $\cO(\eps)$ neighbourhood of $\Gamma$
and we have taken the limit $\eps \to 0$,
the vector field (\ref{eq:fslide}) must be tangent to both $x=0$ and $y=0$,
and so we can write $\f f{\rm slide}(z) = \left( 0,0,\ff h{\rm slide}(z) \right)$.
By taking $T \to 0$ in (\ref{eq:explicitAsymptoticSoln}) we
obtain $\dot{\v x}(t) = \f f{\rm slide}(\v x(t))$.
Therefore $\ff h{\rm slide}$ represents the sliding vector field defined by the zero-perturbation limit.

In the above derivation it is evident that we have the 
following three restrictions on the limiting values $\gamma_i(z)$:
\begin{equation}
\sum_{i=1}^4 \gamma_i(z) = 1 \;, \qquad
\sum_{i=1}^4 \gamma_i(z) f_i(0,0,z) = 0 \;, \qquad
\sum_{i=1}^4 \gamma_i(z) g_i(0,0,z) = 0 \;.
\label{eq:threeRestrictions}
\end{equation}
These restrictions leave us with a one-parameter family of values possible for the $\gamma_i(z)$.
The idea is that the $\gamma_i(z)$ are determined from the rapid switching dynamics of the 
pertubation of (\ref{eq:f}).

To understand these dynamics it suffices to replace each $\f fi$ in (\ref{eq:f})
with its value at $\v x = (0,0,z)$ because we take the limit $T \to 0$.
This reduces (\ref{eq:f}) to the piecewise-constant system,
\begin{equation}
(\dot{x},\dot{y},\dot{z}) =
\begin{cases}
(\ff a1,\ff b1,\ff c1) \;, & (x,y) \in {\cal Q}_1 \\[-1.6mm]
\hspace{8mm} \vdots \\[-1.6mm]
(\ff a4,\ff b4,\ff c4) \;, & (x,y) \in {\cal Q}_4
\end{cases} \;,
\label{eq:f3dPWC}
\end{equation}
where we introduce the constant vector $\f fi = (\ff ai,\ff bi,\ff ci)$, for each $i$.
The $x$ and $y$ components of (\ref{eq:f3dPWC}) are decoupled from $z$
and hence we can restrict our attention to the two-dimensional system
\begin{equation}
(\dot{x},\dot{y}) =
\begin{cases}
(\ff a1,\ff b1) \;, & (x,y) \in {\cal Q}_1 \\[-1.6mm]
\hspace{5mm} \vdots \\[-1.6mm]
(\ff a4,\ff b4) \;, & (x,y) \in {\cal Q}_4
\end{cases} \;.
\label{eq:f2dPWC}
\end{equation}

Furthermore, to analyse pertubations of (\ref{eq:f3dPWC}) it is helpful to scale $\v x$ and $t$ by $\eps$.
With this scaling, perturbations are $\cO(1)$,
the time interval over which the $\gamma_i$ are determined is $\left[ 0 ,\, \frac{T}{\eps} \right]$,
but the components of (\ref{eq:f3dPWC}) are unchanged because they are constant.
Assuming $\frac{T}{\eps} \to \infty$ as $\eps$ and $T$ are taken to zero,
we conclude that {\em the $\gamma_i$ are to be determined from attractors of
the pertubation of (\ref{eq:f2dPWC}) using $\eps = 1$}.
The induced sliding solution is then the solution to the vector field
\begin{equation}
h_{\rm slide} = \sum_{i=1}^4 \gamma_i c_i \;.
\label{eq:hSlide}
\end{equation}

\subsection{Jitter: a simulation}
\label{sub:simulation}

To illustrate the extreme sensitivity typical in $\ff h{\rm slide}$,
we consider a one-parameter family of vector fields
obtained by linearly interpolating two random piecewise-constant vector fields.
A similar approach is used in \cite{AlSe99}.

For each $i$ we write
\begin{equation}
\begin{split}
\ff fi(x,y,z) &= (1-z) a_{i,0} + z a_{i,1} \;, \\
\ff gi(x,y,z) &= (1-z) b_{i,0} + z b_{i,1} \;, \\
\ff hi(x,y,z) &= (1-z) c_{i,0} + z c_{i,1} \;,
\end{split}
\label{eq:fghLinearInt}
\end{equation}
where
\begin{equation}
\begin{aligned}
a_{1,0} &= -0.5822 \;, & a_{2,0} &= 0.5408 \;, & a_{3,0} &= 0.8700 \;, & a_{4,0} &= -0.2647 \;, \\
b_{1,0} &= -0.3180 \;, & b_{2,0} &= -0.1192 \;, & b_{3,0} &= 0.9399 \;, & b_{4,0} &= 0.6456 \;, \\
c_{1,0} &= -0.0410 \;, & c_{2,0} &= 0.2788 \;, & c_{3,0} &= 0.0896 \;, & c_{4,0} &= 0.2948 \;,
\end{aligned}
\label{eq:abc0}
\end{equation}
and
\begin{equation}
\begin{aligned}
a_{1,1} &= -0.5438 \;, & a_{2,1} &= 0.7211 \;, & a_{3,1} &= 0.5225 \;, & a_{4,1} &= -0.9937 \;, \\
b_{1,1} &= -0.2186 \;, & b_{2,1} &= -0.1057 \;, & b_{3,1} &= 0.1097 \;, & b_{4,1} &= 0.0636 \;, \\
c_{1,1} &= -0.1908 \;, & c_{2,1} &= -0.1032 \;, & c_{3,1} &= -0.2682 \;, & c_{4,1} &= 0.5272 \;.
\end{aligned}
\label{eq:abc1}
\end{equation}
The values $a_{i,j}$ and $b_{i,j}$ were obtained by sampling from uniform distributions on $[0,1]$ and $[-1,0]$
(as appropriate to get the desired signs) and the $c_{i,j}$
were obtained by sampling from a uniform distribution on $[-1,1]$.
The values were rounded to four decimal places (for easy reproducibility)
and we consider $z \in [0,1]$ with which each $\f fi$ is directed inwards.

The restrictions (\ref{eq:threeRestrictions}) allow for only a finite range of values of $\ff h{\rm slide}$.
This range is indicated in Fig.~\ref{fig:allBifDiag_z21} as an unshaded strip
and the canopy solution (\ref{eq:canopy}) is shown as a dashed line.
Fig.~\ref{fig:allBifDiag_z21} shows $\ff h{\rm slide}$ for the four different perturbations considered below
(using $\phi = \frac{1}{2}$ in each case).
With the exception of perturbation by noise,
the graphs of $\ff h{\rm slide}$ are highly irregular in places; 
this is not numerical error, $\ff h{\rm slide}$ actually does this!.
We refer to this phenomenon as {\em jitter}.

\begin{figure}[t!]
\begin{center}
\setlength{\unitlength}{1cm}
\begin{picture}(15,7.5)
\put(0,0){\includegraphics[height=7.5cm]{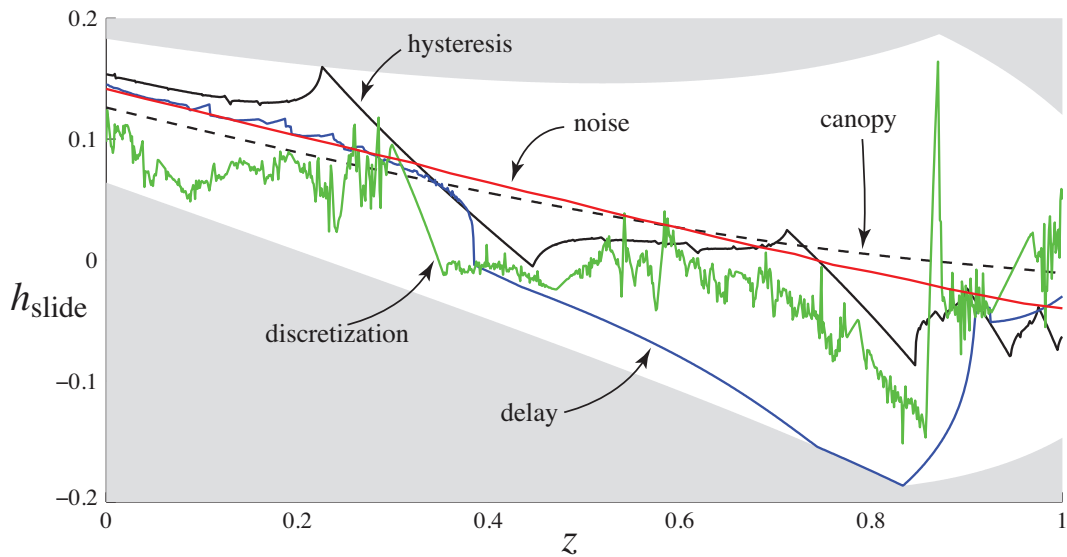}}
\end{picture}
\caption{
Different derivations of the sliding vector field $h_{\rm slide}$ on $\Gamma$
(the intersection of the discontinuity surfaces $x=0$ and $y=0$) at different values of $z$
for the piecewise-smooth system (\ref{eq:f}) with (\ref{eq:fghLinearInt})-(\ref{eq:abc1}), as discussed in the text.
The unshaded region is the convex hull, and the curves show: the canopy (dashed), hysteresis (black), white noise (red), time-discretization (green), time-delay (blue); as labelled [color online]. 
\label{fig:allBifDiag_z21}
}
\end{center}
\end{figure}




\section{Hysteresis}
\label{sec:hysteresis}
\setcounter{equation}{0}

Here we introduce hysteresis to the two switches in (\ref{eq:f})
by requiring that switching occurs with $\alpha$ and $\beta$ spatial delays about $x=0$ and $y=0$ respectively.
In full this means applying the following rules for changing the mode
of an orbit when $x(t) = \pm \alpha$ or $y(t) = \pm \beta$:
\begin{equation}
\begin{aligned}
& \raisebox{.4mm}{\tiny $\bullet$} {\rm ~~at~} x = \alpha, {\rm ~mode~} 2 \to 1 {\rm ~and~} 3 \to 4, \\
& \raisebox{.4mm}{\tiny $\bullet$} {\rm ~~at~} x = -\alpha, {\rm ~mode~} 1 \to 2 {\rm ~and~} 4 \to 3, \\
& \raisebox{.4mm}{\tiny $\bullet$} {\rm ~~at~} y = \beta, {\rm ~mode~} 3 \to 2 {\rm ~and~} 4 \to 1, \\
& \raisebox{.4mm}{\tiny $\bullet$} {\rm ~~at~} y = -\beta, {\rm ~mode~} 2 \to 3 {\rm ~and~} 1 \to 4.
\end{aligned}
\label{eq:hysteresisRules}
\end{equation}
It is possible for two switches to coincide unambiguously, for example from mode $i=1$ at $x=-\alpha$ and $y=-\beta$ with both $x$ and $y$ decreasing, the mode switches to $i=3$ (either via $i:1\mapsto2\mapsto3$ or $i:1\mapsto4\mapsto3$).

For the piecewise-constant vector field (\ref{eq:f3dPWC}) perturbed by hysteresis:
(i) the structure of the attractor dynamics is determined by the four angles ${\rm atan2}(b_i,a_i)$,
(ii) the relative times spent in each quadrant are determined by the eight values of $a_i$ and $b_i$,
(iii) the zero-hysteresis sliding solution is determined by the $12$ values of $a_i$, $b_i$ and $c_i$.

\subsection{Chatterbox dynamics}
\label{sub:chatterbox}

Here we summarise the theoretical results of Alexander and Seidman \cite{AlSe99}
for the hysteretic system (\ref{eq:f2dPWC}) with (\ref{eq:hysteresisRules}).

In the case that each $\f fi$ is directed inwards,
orbits of (\ref{eq:f2dPWC}) with (\ref{eq:hysteresisRules}) cannot escape the rectangle
\begin{equation}
\Omega = \left\{ (x,y) ~\big|~ |x| \le \alpha ,\, |y| \le \beta \right\} \;.
\label{eq:chatterbox}
\end{equation}
Following \cite{AlSe99} we refer to $\Omega$ as a {\em chatterbox}.

\begin{figure}[b!]
\begin{center}
\setlength{\unitlength}{1cm}
\begin{picture}(15.17,5.8)
\put(0,0){\includegraphics[height=5.5cm]{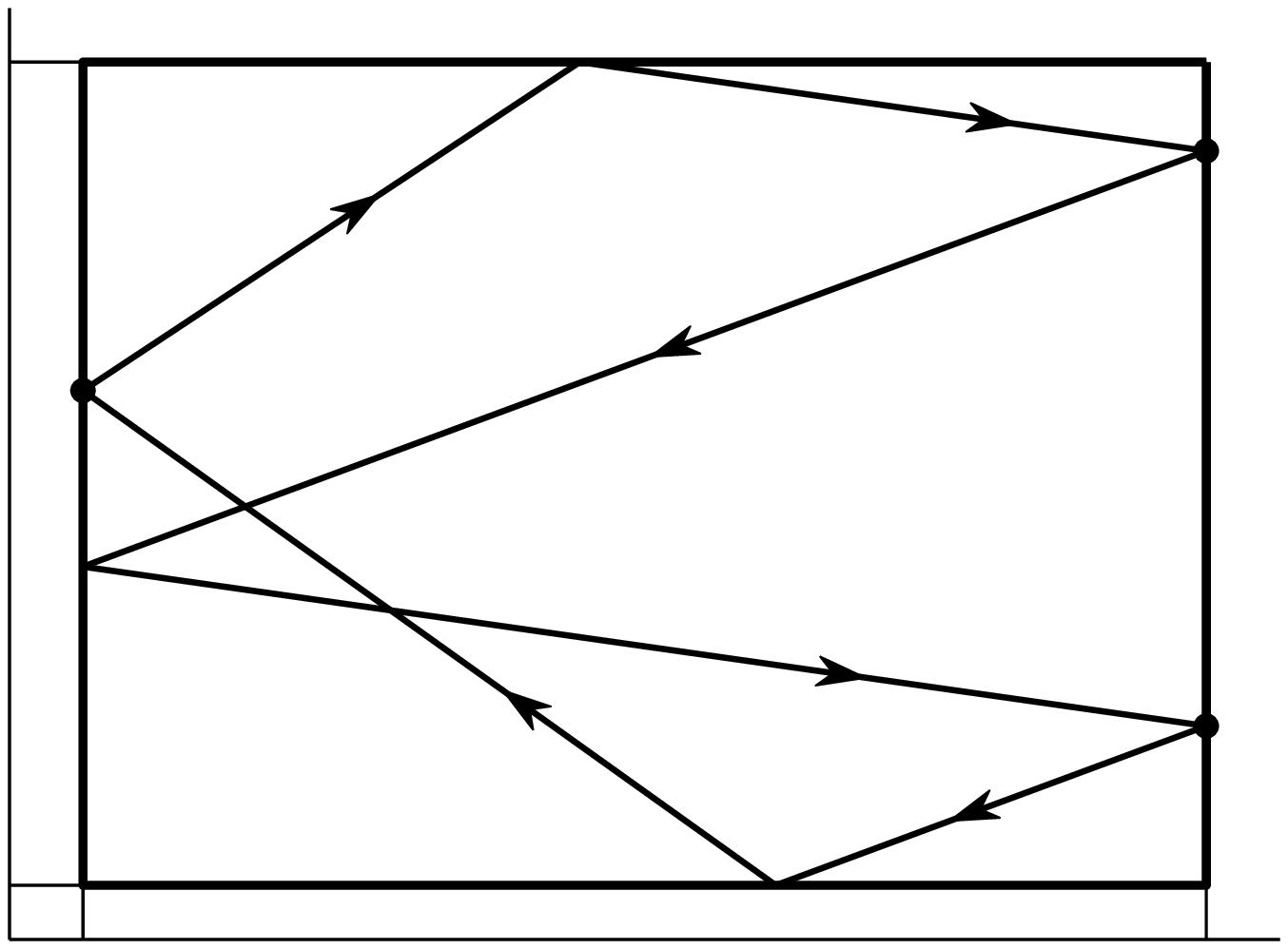}}
\put(7.83,0){\includegraphics[height=5.5cm]{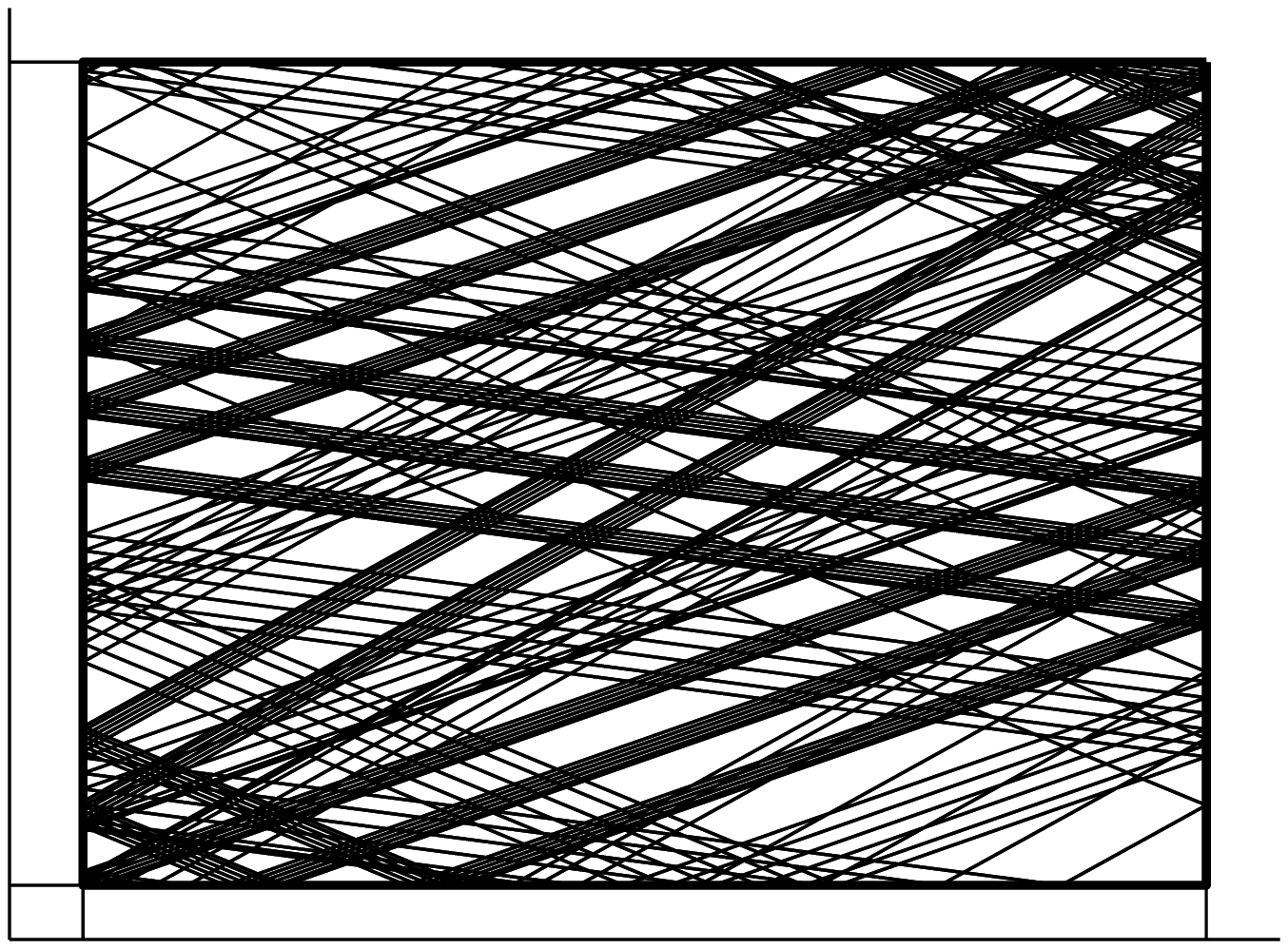}}
\put(.9,5.5){\large \sf \bfseries A}
\put(3.8,0){$x$}
\put(0,3.1){$y$}
\put(.7,.28){\small $-\alpha$}
\put(6.75,.28){\small $\alpha$}
\put(-.02,.74){\small $-\beta$}
\put(.28,5.07){\small $\beta$}
\put(3.92,3.37){\scriptsize $1$}
\put(5,1.68){\scriptsize $2$}
\put(5.44,1.27){\scriptsize $1$}
\put(3.1,1.57){\scriptsize $4$}
\put(2.4,4.56){\scriptsize $3$}
\put(5.77,4.6){\scriptsize $2$}
\put(7,4.66){\scriptsize $\eta \approx 0.97$}
\put(7,1.63){\scriptsize $\eta \approx 0.80$}
\put(1.17,3.4){\scriptsize $\eta \approx 0.35$}
\put(8.73,5.5){\large \sf \bfseries B}
\put(11.63,0){$x$}
\put(7.83,3.1){$y$}
\put(8.53,.28){\small $-\alpha$}
\put(14.58,.28){\small $\alpha$}
\put(7.81,.74){\small $-\beta$}
\put(8.11,5.07){\small $\beta$}
\end{picture}
\caption{
Attractors in the chatterbox $\Omega$ (\ref{eq:chatterbox})
of the two-dimensional piecewise-constant system (\ref{eq:f2dPWC}) perturbed by hysteresis (\ref{eq:hysteresisRules})
for the random example (\ref{eq:fghLinearInt})-(\ref{eq:abc1}).
In panel A, $z = 0.3$; in panel B, $z = 0.5$.
Both panels use $\phi = 0.5$ and $\eps = 1$ in (\ref{eq:alphabeta}) --- the size of $\Omega$.
More precisely, each panel shows part of a numerically computed forward orbit with transient dynamics removed.
In panel A, we have labelled different parts of the orbit by its mode
and alternate intersections with the boundary of $\Omega$
by the value of $\eta$ (to two decimal places) for the circle map defined in \sect{sub:circleMap}.
\label{fig:chatterbox21}
} 
\end{center}
\end{figure}

The system (\ref{eq:f2dPWC}) with (\ref{eq:hysteresisRules}) can be reformulated
as a doubly-periodic vector field on the $(x,y)$-plane
by tiling the $(x,y)$-plane with copies of $\Omega$
in a such way that whenever an orbit reaches the boundary of one $\Omega$ it passes continuously through
into an adjacent $\Omega$.
This is helpful because we can then apply general results established for
two-dimensional doubly-periodic vector fields \cite{Ar88}.
Firstly, we can say that the system has a unique rotation number
(although this does not determine the sliding solution which depends on the
proportions of time spent in each mode).
Secondly, we can say that the rotation number is almost always rational
in the sense that in the space of all systems (\ref{eq:f2dPWC}) with (\ref{eq:hysteresisRules}),
the subset having irrational rotation number is of measure zero.
Rational rotation numbers correspond to periodic orbits.
Alexander and Seidman show that if (\ref{eq:f2dPWC}) with (\ref{eq:hysteresisRules}) has a rational rotation number
then there exist exactly two periodic orbits.
One of these is attracting and attracts almost all points in $\Omega$, the other is unstable.
Therefore almost every $(x_0,y_0) \in \Omega$ has the same $\omega$-limit set
and hence leads to the same values of $\gamma_i$.
In this sense the zero-hysteresis sliding solution is unique.
In addition, Alexander and Seidman show that this solution
is a Lipschitz function of the components of (\ref{eq:f2dPWC}).

Fig.~\ref{fig:chatterbox21} shows attractors of (\ref{eq:f}) with (\ref{eq:fghLinearInt})-(\ref{eq:abc1}).
In panel A the attractor has six switches per period.
In panel B the attractor is either quasi-periodic
or has such a high period that we were unable to detect it.

\subsection{Circle map}
\label{sub:circleMap}

The dynamics within $\Omega$ can be captured by a map between consecutive switching points.
However, if we ignore switches at corners (which constitute a measure-zero subset),
when an orbit switches its mode changes from even to odd or vice-versa.
For this reason it is simpler to work with a map from the $n^{\rm th}$ switching point
to the $(n+2)^{\rm th}$ switching point.

Let $(x,y)$ be any point on the boundary of $\Omega$ corresponding to an orbit $\v x(t)$
that has just undergone a switch.
By assuming that the mode of $\v x(t)$ at this point is odd,
the mode is completely determined by the point $(x,y)$.
Specifically, if $(x,y)$ lies on either the top or right edge of $\Omega$ then $\v x(t)$ is in mode $1$
and if $(x,y)$ lies on either the bottom or left edge of $\Omega$ then $\v x(t)$ is in mode $3$.

We map the boundary of $\Omega$ to a circle $\mathbb{S}^1$ by the continuous function
\begin{equation}
\eta = B(x,y) = \begin{cases}
\frac{\alpha - x}{8 \alpha} \;, & y = \beta \\
\frac{3 \beta - y}{8 \beta} \;, & x = -\alpha \\
\frac{5 \alpha + x}{8 \alpha} \;, & y = -\beta \\
\frac{7 \beta + y}{8 \beta} \;, & x = \alpha
\end{cases} \;.
\label{eq:B}
\end{equation}
With this definition the corners of $\Omega$ map to integer multiples of $\frac{1}{4}$
as indicated in Fig.~\ref{Fig:circle}-B.

Each $\eta \in \mathbb{S}^1$ corresponds to a point $(x,y)$ on the boundary of $\Omega$
in either mode $1$ or mode $3$, as determined by the side of $\Omega$ to which $(x,y)$ belongs.
For any $\eta \in \mathbb{S}^1$, we let $q(\eta) \in \mathbb{S}^1$ correspond to the location
of the forward orbit of $(x,y)$ immediately after its second switch.
Since orbits in $\Omega$ follow lines, and (\ref{eq:B}) is piecewise-linear, the map $q$ is piecewise-linear.
Moreover, it is a continuous, invertible, degree-one circle map that generically involves eight pieces.

To describe $q$ more precisely, let $u_0 < \cdots < u_7$ be the ordered list of the values
\begin{equation}
\textstyle \left\{ 0, \frac{1}{4}, \frac{1}{2}, \frac{3}{4},
q^{-1}(0), q^{-1}(\frac{1}{4}), q^{-1}(\frac{1}{2}), q^{-1}(\frac{3}{4}) \right\} \;.
\nonumber
\end{equation}
and let $v_i = q(u_i)$ denote their images under $q$.
The graph of $q$ is then given by simply connecting each $(u_i,v_i)$ to
$\left( u_{(i+1) {\rm \,mod\,} 8},v_{(i+1) {\rm \,mod\,} 8} \right)$ by a line segment,
taking care to appropriately deal with the topology of $\mathbb{S}^1$, see Fig.~\ref{Fig:circle}-A.

\begin{figure}[h!]
	\begin{center}
		\includegraphics[width=0.8\textwidth]{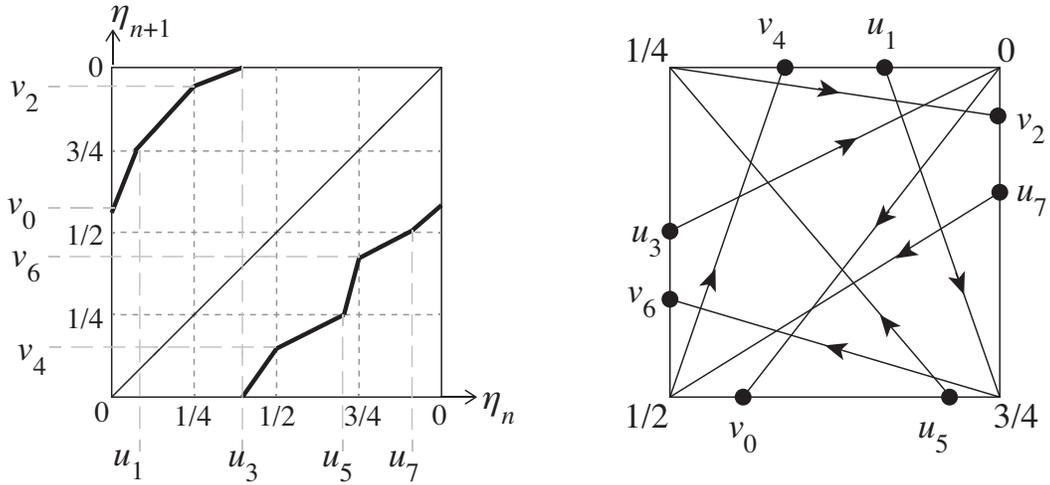}
      	\caption{Geometry of the second return map $q$. An example map is shown on the left: the points $u_3,u_5,u_7,u_1$, map to $\eta=0,\frac14,\frac12,\frac34$, which in turn map to $v_3,v_2,v_4,v_6$. On the right, the second return maps are shown on the chatter box in $\eta$ coordinates (where the corners are $\eta=0,\frac14,\frac12,\frac34$. (Note each lines shows the result of two excursions across the box, {\it not} one, so they do not lie along the vector fields). }
      	\label{Fig:circle}
	\end{center}
\end{figure}	


\begin{figure}[h!]
\begin{center}
\setlength{\unitlength}{1cm}
\begin{picture}(15.17,5.8)
\put(0,0){\includegraphics[height=5.5cm]{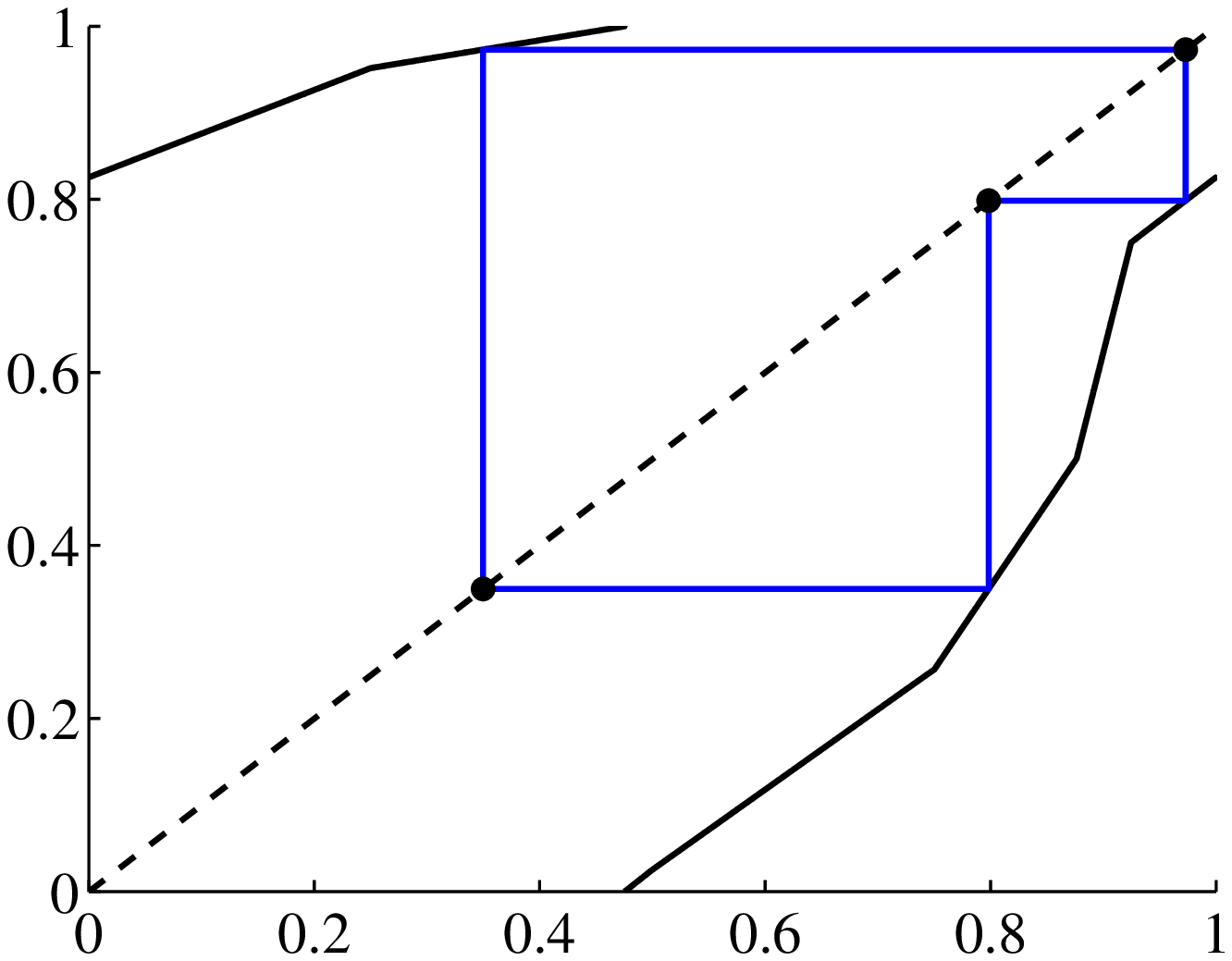}}
\put(7.83,0){\includegraphics[height=5.5cm]{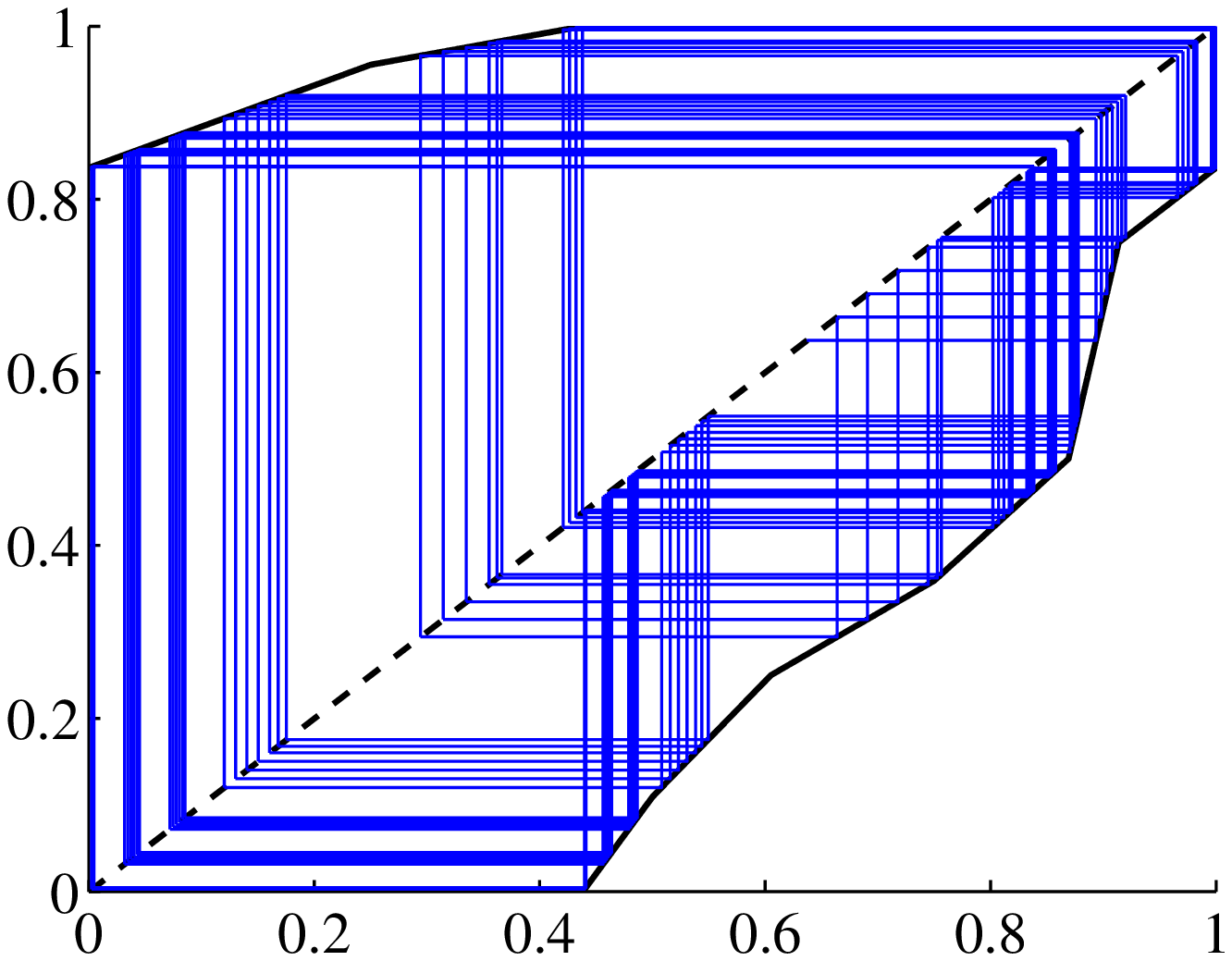}}
\put(1.3,5.5){\large \sf \bfseries A}
\put(4.05,0){$\eta$}
\put(0,2.9){$q(\eta)$}
\put(9.13,5.5){\large \sf \bfseries B}
\put(11.88,0){$\eta$}
\put(7.83,2.9){$q(\eta)$}
\end{picture}
\caption{
Attractors of the circle map $q$
for the system (\ref{eq:f}) with (\ref{eq:fghLinearInt})-(\ref{eq:abc1}) and $\phi = 0.5$.
In panel A, $z = 0.3$; in panel B, $z = 0.5$.
These correspond to the attractors in $\Omega$ shown in Fig.~\ref{fig:chatterbox21}.
\label{fig:circleMap21}
} 
\end{center}
\end{figure}


Fig.~\ref{fig:circleMap21} illustrates the circle map for the system
(\ref{eq:f}) with (\ref{eq:fghLinearInt})-(\ref{eq:abc1}).
These show the attractors of Fig.~\ref{fig:chatterbox21} in the context of the circle map.
Fig.~\ref{fig:modeLockingRegions21} shows how the rotation number and period of the attractor in $\Omega$
varies over the entire range of $\phi$ and $z$ values.
There are open regions for which the rotation number is constant --- these are {\em mode-locking regions}.
We observe that the mode-locking regions have points of zero width.
This phenomenon has been described for the sawtooth map \cite{YaHa87}
and piecewise-linear maps on $\mathbb{R}^n$ \cite{SiMe09,Si15c},
but to our knowledge has not previously been detected for piecewise-linear maps
involving more than two different derivatives.

\begin{figure}[h!]
\begin{center}
\setlength{\unitlength}{1cm}
\begin{picture}(15,6)
\put(0,0){\includegraphics[height=6cm]{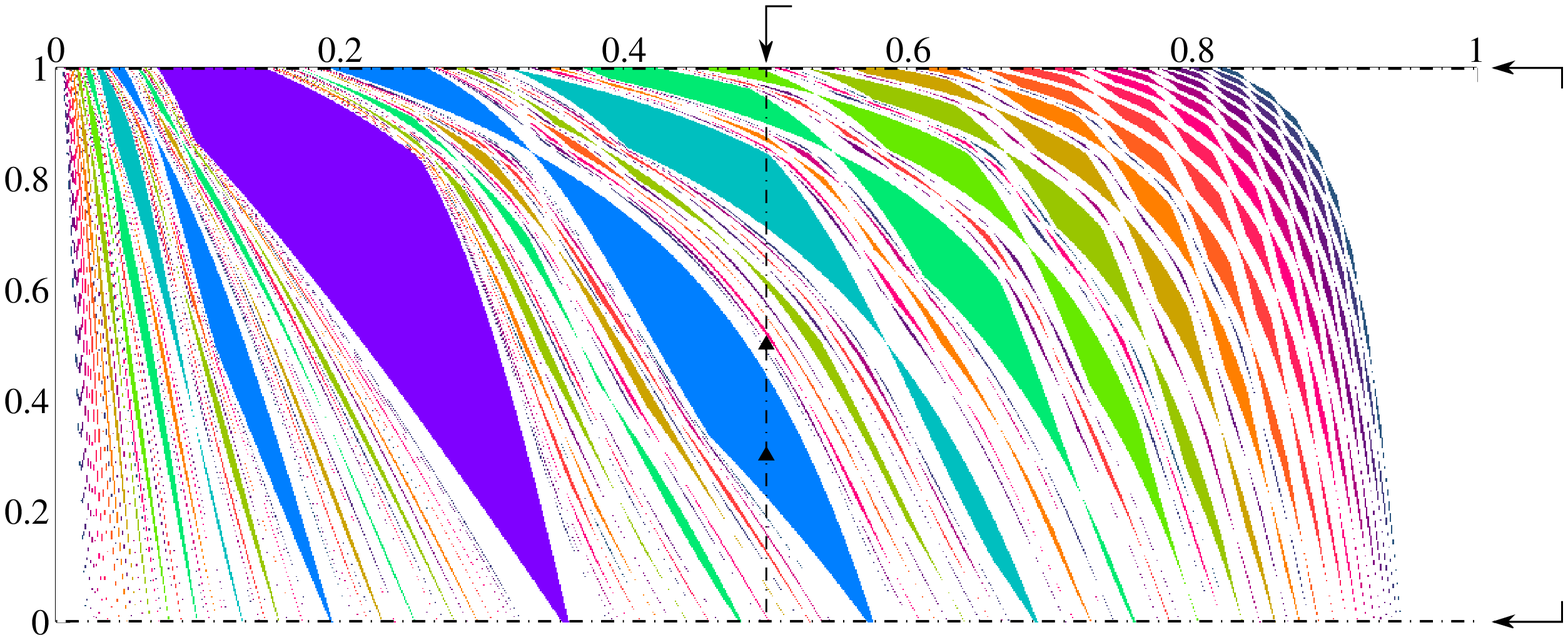}}
\put(4.8,5.75){$\phi$}
\put(.2,2.8){$z$}
\put(2.32,.18){\scriptsize $\frac{1}{5}$}
\put(2.74,.18){\scriptsize $\frac{1}{4}$}
\put(3.48,.18){\scriptsize $\frac{1}{3}$}
\put(4.21,.18){\scriptsize $\frac{2}{5}$}
\put(5.55,.18){\scriptsize $\frac{1}{2}$}
\put(7.06,.18){\scriptsize $\frac{3}{5}$}
\put(8.20,.18){\scriptsize $\frac{2}{3}$}
\put(9.64,.18){\scriptsize $\frac{3}{4}$}
\put(10.5,.18){\scriptsize $\frac{4}{5}$}
\put(7.73,5.8){\scriptsize Fig.~\ref{fig:allBifDiag_z21}}
\put(13.9,.76){\scriptsize Fig.~\ref{fig:hystBifDiag_phi21}-A}
\put(13.9,4.92){\scriptsize Fig.~\ref{fig:hystBifDiag_phi21}-B}
\put(7.53,1.88){\scriptsize \colorbox{white}{Fig.~\ref{fig:circleMap21}-A}}
\put(7.53,2.84){\scriptsize \colorbox{white}{Fig.~\ref{fig:circleMap21}-B}}
\end{picture}
\caption{
Mode-locking regions of the circle map $q$
for the system (\ref{eq:f}) with (\ref{eq:fghLinearInt})-(\ref{eq:abc1}).
Different colors correspond to different periods $n$ up to $n = 20$;
uncolored regions correspond to a period $n > 20$ or a quasiperiodic orbit [color online].
At the bottom of the figure we have indicated the rotation numbers $\frac{m}{n}$ of mode-locking regions with $n \le 5$.
Several other figures correspond to cross-sections or particular points in this figure as indicated.
\label{fig:modeLockingRegions21}
} 
\end{center}
\end{figure}

\subsection{The zero-hysteresis solution}
\label{sub:zeroHystSoln}

As discussed in \sect{sub:zeroPertLimit},
the value of each $\gamma_i$ in the sliding vector field (\ref{eq:hSlide})
is defined as the fraction of time that an orbit following the attractor spends in mode $i$.
For the attractor shown in Fig.~\ref{fig:chatterbox21}-A, for example,
we have $\gamma_1 = 0.35$, $\gamma_2 = 0.38$, $\gamma_3 = 0.08$ and $\gamma_4 = 0.19$, to two decimal places.
Then $h_{\rm slide} = 0.10$ (to two decimal places).
This corresponds to $z = 0.3$ in Fig.~\ref{fig:allBifDiag_z21}.

As shown in Fig.~\ref{fig:allBifDiag_z21}, the variation in the zero-hysteresis value of $h_{\rm slide}$
is rather erratic for some ranges of values of $z$ but smooth for other ranges.
The smooth ranges correspond to attractors with a fixed rotation number.
For instance between $z \approx 0.23$ to $z \approx 0.45$ the rotation number is $\frac{2}{3}$,
as shown in Fig.~\ref{fig:modeLockingRegions21}, and $h_{\rm slide}$ decreases smoothly with the value of $z$.


\begin{figure}[h!]
\begin{center}
\setlength{\unitlength}{1cm}
\begin{picture}(15.17,5.8)
\put(0,0){\includegraphics[height=5.5cm]{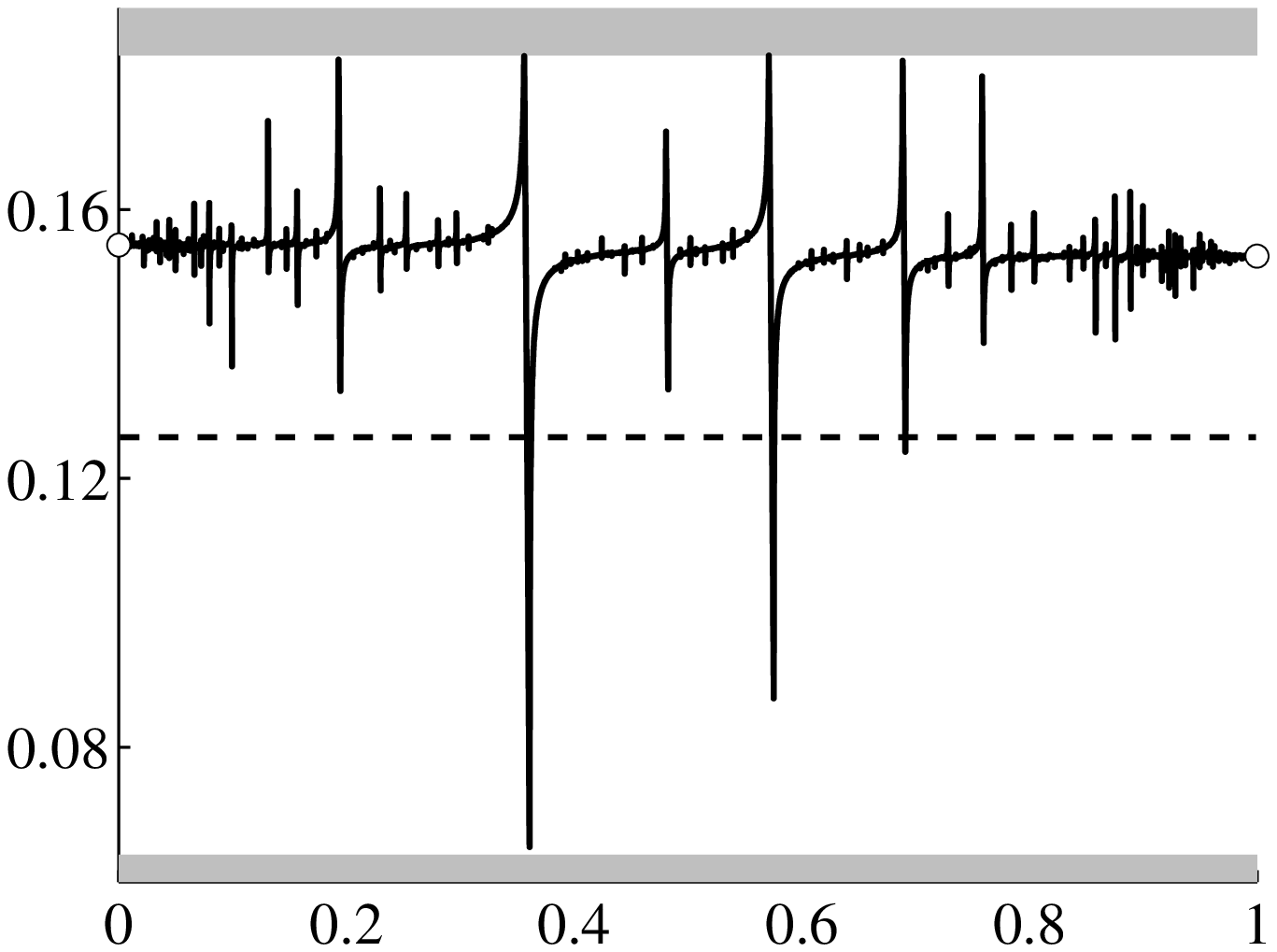}}
\put(7.83,0){\includegraphics[height=5.5cm]{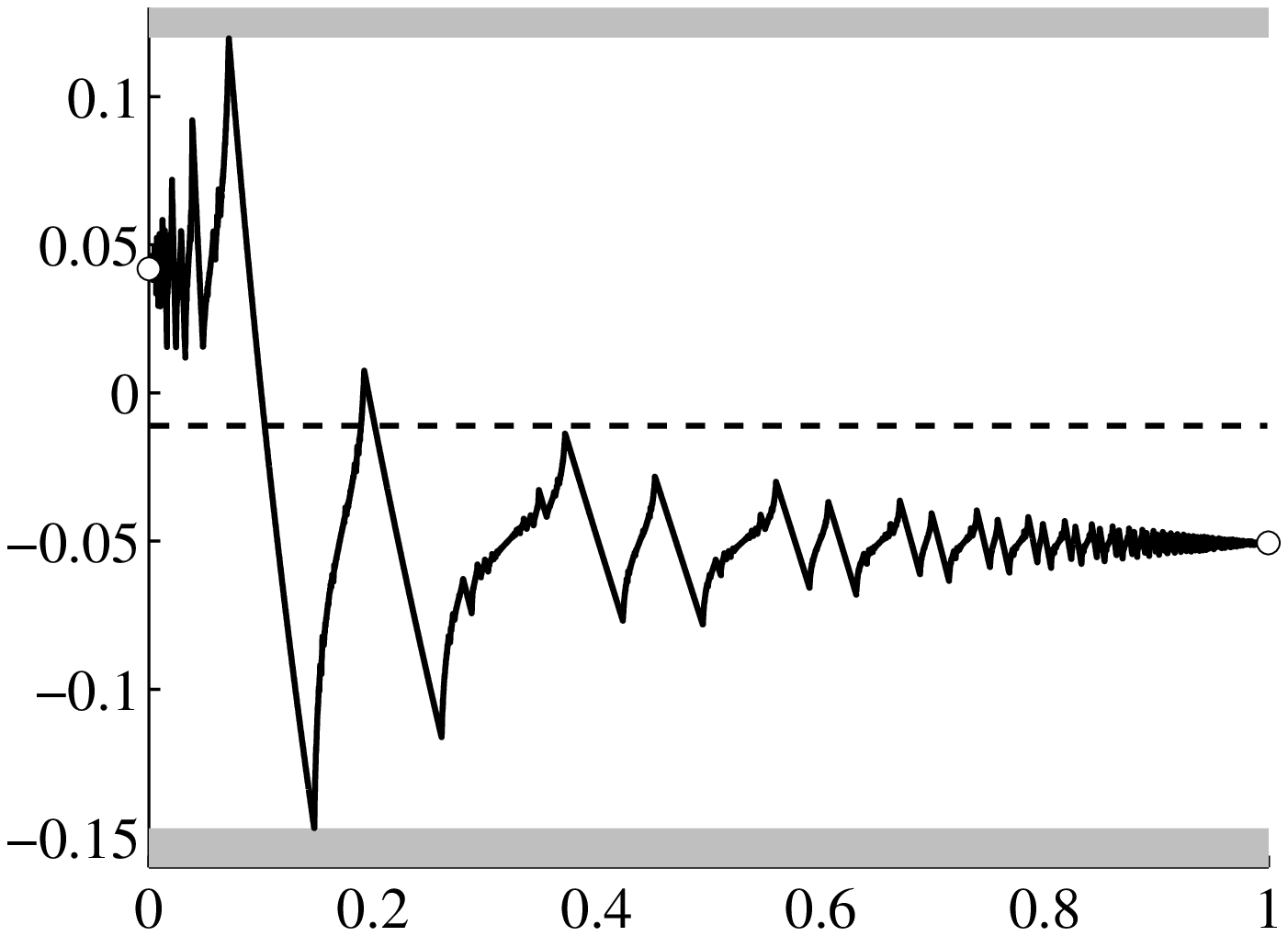}}
\put(1.2,5.5){\large \sf \bfseries A}
\put(4.02,0){$\phi$}
\put(0,3.47){$h_{\rm slide}$}
\put(9.03,5.5){\large \sf \bfseries B}
\put(11.85,0){$\phi$}
\put(7.72,3.15){$h_{\rm slide}$}
\end{picture}
\caption{
The sliding vector field $h_{\rm slide}$ as a function of $\phi$
for (\ref{eq:f}) perturbed by hysteresis (\ref{eq:hysteresisRules}) with (\ref{eq:fghLinearInt})-(\ref{eq:abc1})
and $z = 0$ in panel A and $z = 1$ in panel B.
The values for the $\phi = 0$ and $\phi = 1$ limits, as given by Proposition \ref{pr:phiLimits}, are indicated by circles. The canopy solution is indicated by a dashed line, and the hull corresponds to the entire range of $h_{\rm slide}$ values shown.
\label{fig:hystBifDiag_phi21}
} 
\end{center}
\end{figure}

Sensitivity in the value of $h_{\rm slide}$ also occurs as we vary the value of $\phi$, Fig.~\ref{fig:hystBifDiag_phi21}.
These two plots correspond to horizontal cross-sections of Fig.~\ref{fig:modeLockingRegions21}
and so again we can see that while the rotation number is constant the value of $h_{\rm slide}$ varies smoothly.
However, as we go from $\phi = 0$ to $\phi = 1$ the rotation number varies all the way across its two most extreme values ($0$ to $1$).
Consequently intervals of constant rotation number are relatively small
and for this reason $h_{\rm slide}$ is an extremely erratic function of $\phi$.

The value of $h_{\rm slide}$ can be given explicitly
for the limits $\phi \to 0$ and $\phi \to 1$.
With $\phi \approx 0$ or $\phi \approx 1$, the chatterbox $\Omega$ is a narrow rectangle.
As orbits in $\Omega$ travel between the short sides of $\Omega$
they switch many times between two modes.
During this time the motion is well-approximated by Filippov's solution for a single discontinuity surface.
We can then average the two Filippov solutions to obtain $h_{\rm slide}$.

To state the limiting values of $h_{\rm slide}$ we require some additional notation.
We let
\begin{equation}
\lambda_{14} = \frac{b_4}{b_4 - b_1} \;, \qquad
\lambda_{23} = \frac{b_3}{b_3 - b_2} \;, \qquad
\lambda_{12} = \frac{a_2}{a_2 - a_1} \;, \qquad
\lambda_{43} = \frac{a_3}{a_3 - a_4} \;,
\end{equation}
denote the switching multipliers, see (\ref{eq:convexCombination}),
for switching between two adjacent modes.
For each of the four adjacent pairs of modes, $ij$, we let
\begin{equation}
\f f{ij} = \lambda_{ij} \f fi + (1 - \lambda_{ij}) \f fj \;,
\end{equation}
denote Filippov's sliding solution,
and write $\f f{ij} = (a_{ij},b_{ij},c_{ij})$.
We also let
\begin{equation}
\lambda_{14,23} = \frac{a_{23}}{a_{23} - a_{14}} \;, \qquad
\lambda_{12,43} = \frac{b_{43}}{b_{43} - b_{12}} \;,
\end{equation}
in order to average the corresponding pairs of $\f f{ij}$.

The following result provides the limiting values of $\gamma_i$
from which $h_{\rm slide}$ is given by (\ref{eq:hSlide}).

\begin{proposition}
At any point on $\Gamma$ at which each $\f fi$ of (\ref{eq:f}) is directed inwards,
the zero-hysteresis sliding solution yields the following limiting values of $\gamma_i$.
In the limit $\phi \to 0$,
\begin{equation}
\gamma_1 = \lambda_{14,23} \lambda_{14} \;, \quad
\gamma_2 = (1-\lambda_{14,23}) \lambda_{23} \;, \quad
\gamma_3 = (1-\lambda_{14,23}) (1-\lambda_{23}) \;, \quad
\gamma_4 = \lambda_{14,23} (1-\lambda_{14}) \;,
\label{eq:gammaphizero}
\end{equation}
and in the limit $\phi \to 1$,
\begin{equation}
\gamma_1 = \lambda_{12,43} \lambda_{12} \;, \quad
\gamma_2 = \lambda_{12,43} (1-\lambda_{12}) \;, \quad
\gamma_3 = (1-\lambda_{12,43}) (1-\lambda_{43}) \;, \quad
\gamma_4 = (1-\lambda_{12,43}) \lambda_{43} \;.
\label{eq:gammaphione}
\end{equation}
\label{pr:phiLimits}
\end{proposition}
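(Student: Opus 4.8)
The plan is to analyze the narrow-rectangle limit of the chatterbox directly. Consider $\phi \to 0$, so that $\beta = \eps \sin(\pi\phi/2) \to 0$ while $\alpha = \eps\cos(\pi\phi/2) \to \eps$; thus $\Omega$ degenerates to a thin horizontal strip $|x|\le\alpha$, $|y|\le\beta$. First I would argue that the attracting periodic orbit in $\Omega$ has a definite coarse structure in this limit: because each $\f fi$ is directed inwards, in the upper half ($y>0$) the orbit can only occupy modes $1$ and $2$, and since $b_1<0<b_2$ are both bounded away from zero while the $x$-extent is $\cO(1)$, the orbit undergoes $\cO(1/\beta)$ switches between modes $1$ and $2$ while traversing from $x=\alpha$ to $x=-\alpha$ (and similarly modes $3,4$ in the lower half). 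During such a sweep the net horizontal motion is governed by the average of $a_1$ and $a_2$ weighted by the time split between the two modes, and this time split is forced — to leading order — to be exactly the Filippov multiplier $\lambda_{14}$ on $y=\beta$... wait, more carefully: the fast chatter between modes $1$ and $2$ occurs about $x=\pm\alpha$? No — let me restate. The switches between modes $1$ and $2$ happen at $x=\pm\alpha$, i.e. at the \emph{short} sides, so they are not fast; rather the switches at $y=\pm\beta$ are the fast ones, chattering between modes $1\leftrightarrow4$ near the top-right region and $2\leftrightarrow3$ near the top-left. So in the thin strip the orbit chatters rapidly across $y=\beta$, effectively following Filippov's solution $\f f{14}$ while $x>0$ and $\f f{23}$ while $x<0$, and likewise across $y=-\beta$. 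The key reduction is then: to leading order the orbit behaves like the one-dimensional hysteretic dynamics of the planar piecewise-constant field that equals $(a_{14},0)$ for $x>0$ chatter and $(a_{23},0)$ for $x<0$ chatter, confined to $|x|\le\alpha$, which in turn chatters across $x=0$ with time-split $\lambda_{14,23} = a_{23}/(a_{23}-a_{14})$.

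The second step is to convert these time-splits into the $\gamma_i$. The fraction of time with $x>0$ is $\lambda_{14,23}$; within that fraction, the chatter across $y=\beta$ spends a fraction $\lambda_{14}$ in mode $1$ and $1-\lambda_{14}$ in mode $4$. Hence $\gamma_1 = \lambda_{14,23}\lambda_{14}$ and $\gamma_4 = \lambda_{14,23}(1-\lambda_{14})$, and symmetrically $\gamma_2 = (1-\lambda_{14,23})\lambda_{23}$, $\gamma_3 = (1-\lambda_{14,23})(1-\lambda_{23})$, which is precisely \eqref{eq:gammaphizero}. I would verify consistency: the three restrictions \eqref{eq:threeRestrictions} should hold identically with these values — $\sum\gamma_i = 1$ is immediate, and $\sum \gamma_i b_i = 0$ follows from $\lambda_{14}b_1 + (1-\lambda_{14})b_4 = 0$ together with $\lambda_{23}b_2+(1-\lambda_{23})b_3=0$, while $\sum\gamma_i a_i = 0$ follows from the definitions of $a_{14}, a_{23}$ and $\lambda_{14,23}$. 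The $\phi\to1$ case \eqref{eq:gammaphione} is identical after swapping the roles of $x$ and $y$ (so the strip is thin in $x$, the fast chatter is across $x=\pm\alpha$ pairing modes $1\leftrightarrow2$ and $4\leftrightarrow3$, and the slow dynamics chatters across $y=0$ with split $\lambda_{12,43}$).

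For rigor I would lean on the Alexander--Seidman results quoted in \sect{sub:chatterbox}: the attractor's $\gamma_i$ depend Lipschitz-continuously on the components of \eqref{eq:f2dPWC}, and — crucially — on $\alpha,\beta$ as these enter the geometry. The argument above is really a singular-perturbation / averaging estimate: I would show that as $\beta\to0$ the $\omega$-limit set of the circle map $q$ concentrates so that the measured mode fractions converge to the claimed products, with errors $\cO(\beta)$ coming from the $\cO(1)$ leftover horizontal displacement accrued during a single top-to-bottom chatter relative to the $\cO(1)$ box width. One clean way to make this precise is to track, over one return to $x=\alpha$, the total time in each mode; the chatter across $y=\beta$ contributes a block of time split in ratio $\lambda_{14}:(1-\lambda_{14})$ up to $\cO(\beta)$ corrections because consecutive crossing points of $y=\beta$ are $\cO(\beta)$ apart in $x$, and summing $\cO(1/\beta)$ such blocks gives the stated ratio in the limit; the same for the other three edges, and the bookkeeping of how long the orbit spends in $x>0$ versus $x<0$ reduces to the two-mode Filippov chatter with multiplier $\lambda_{14,23}$.

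The main obstacle I anticipate is making the averaging uniform over the whole periodic orbit rather than over a single sweep — one must rule out the possibility that the orbit spends an anomalously long stretch near a corner of $\Omega$ (where the thin-strip reduction degenerates) that does not vanish in the limit. The inward-pointing hypothesis on all four $\f fi$, which bounds each $|b_i|$ and $|a_i|$ away from zero with definite signs, should prevent this: corners are traversed transversally in bounded time, so their contribution to the time average is $\cO(\beta)$. Establishing that bound carefully — essentially a uniform lower bound on the speed with which orbits leave corner neighbourhoods — is the one place where the proof needs genuine care rather than bookkeeping.
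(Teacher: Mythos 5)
Your argument is correct and follows essentially the same two-scale averaging as the paper's proof: the fast $1\leftrightarrow 4$ and $2\leftrightarrow 3$ chatter across the thin $y$-band produces the Filippov splits $\lambda_{14}$ and $\lambda_{23}$ within each horizontal sweep, while the slow alternation between the leftward and rightward sweeps gives the split $\lambda_{14,23}$ (the paper computes the sweep durations $T_{14}$ and $T_{23}$ explicitly rather than invoking your reduced one-dimensional hysteretic system, but these are the same calculation). One wording slip worth fixing: $\lambda_{14,23}$ is the fraction of time spent in modes $1$ and $4$, i.e.\ during the leftward sweep, which occupies the whole range $|x|\le\alpha$ because the mode pair changes only at $x=\pm\alpha$ under hysteresis --- it is not the fraction of time with $x>0$ --- although your final formulas do use the correct quantity.
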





\begin{proof}
Here we prove the result for $\phi = 0$; the result for $\phi = 1$ follows by symmetry.
By the results of \cite{AlSe99}, see \sect{sub:chatterbox},
there exists a unique attractor in the chatterbox $\Omega$
(or an orbit that densely fills $\Omega$ and plays the role of an attractor).
For each $i$, $\gamma_i$ is the fraction of time that the attractor spends in mode $i$.

First consider an orbit in the attractor as it travels from
any point on the right boundary of $\Omega$, $x = \alpha$, until reaching the left boundary, $x = -\alpha$.
During this time the orbit switches between modes $1$ and $4$.
With $\phi \approx 0$, the number of switches is $\cO \left( \frac{1}{\phi} \right)$.
Thus the fraction of time spent in mode $1$ is $\tilde{\lambda}_{14} = \lambda_{14} + \cO(\phi)$,
and the fraction of time spent in mode $4$ is $1-\tilde{\lambda}_{14}$.
Also the time taken to travel from $x = \alpha$ to $x = -\alpha$ is
\begin{equation}
T_{14} = \frac{-2 \alpha}{\tilde{\lambda}_{14} f_1 + (1-\tilde{\lambda}_{14}) f_4} \;.
\label{eq:T14}
\end{equation}

Upon reaching $x = -\alpha$, the orbit subsequently travels back to $x = \alpha$ switching between modes $2$ and $3$.
The fraction of time spent in mode $2$ is $\tilde{\lambda}_{23} = \lambda_{23} + \cO(\phi)$,
and the fraction of time spent in mode $3$ is $1-\tilde{\lambda}_{23}$.
Also the time taken to travel from $x = -\alpha$ to $x = \alpha$ is
\begin{equation}
T_{23} = \frac{2 \alpha}{\tilde{\lambda}_{23} f_2 + (1-\tilde{\lambda}_{23}) f_3} \;.
\label{eq:T23}
\end{equation}

By combining these observations, we see that as the orbit travels from $x = \alpha$ until it next arrives at this boundary,
the fraction of time spent in mode $1$, for instance, is
$\frac{T_{14} \tilde{\lambda}_{14}}{T_{14} + T_{23}}$.
Since this is true between any consecutive times at which the orbit is located on $x = \alpha$,
it is also true for evolution of the attractor over all $t \in \mathbb{R}$.
Hence $\gamma_1 = \frac{T_{14} \tilde{\lambda}_{14}}{T_{14} + T_{23}}$.
By using (\ref{eq:T14}) and (\ref{eq:T23}) it is readily seen that this value limits to the value of $\gamma_1$ in (\ref{eq:gammaphizero}) as $\phi \to 0$.
The values of $\gamma_2$, $\gamma_3$ and $\gamma_4$ in (\ref{eq:gammaphizero}) follow in the same fashion from the above observations.
\end{proof}

\subsection{Chaotic dynamics induced by hysteresis}
\label{sub:chaos}

\begin{figure}[b!]
\begin{center}
\setlength{\unitlength}{1cm}
\begin{picture}(15.1,5)
\put(0,0){\includegraphics[height=4.7cm]{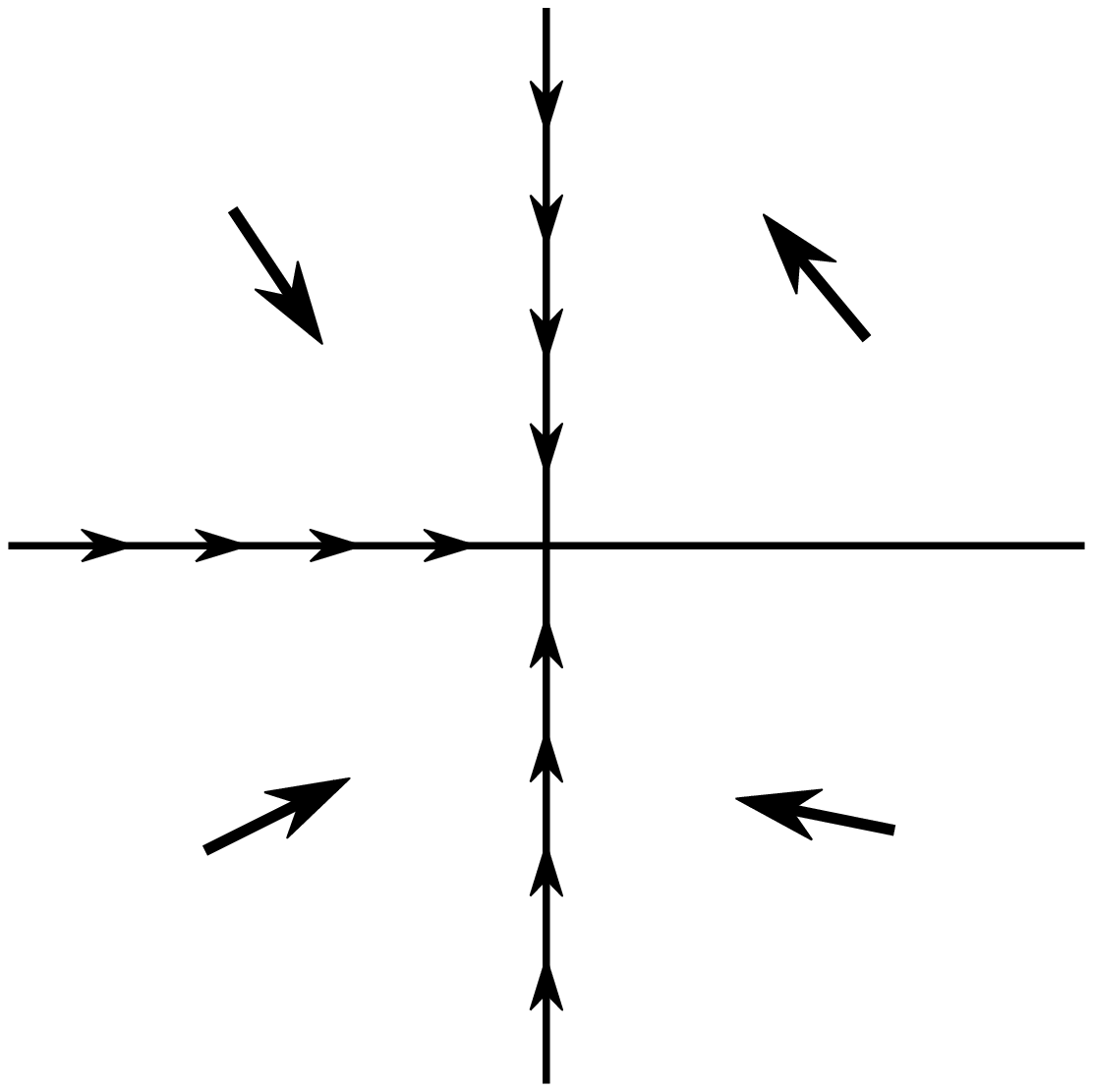}}
\put(5.2,0){\includegraphics[height=4.7cm]{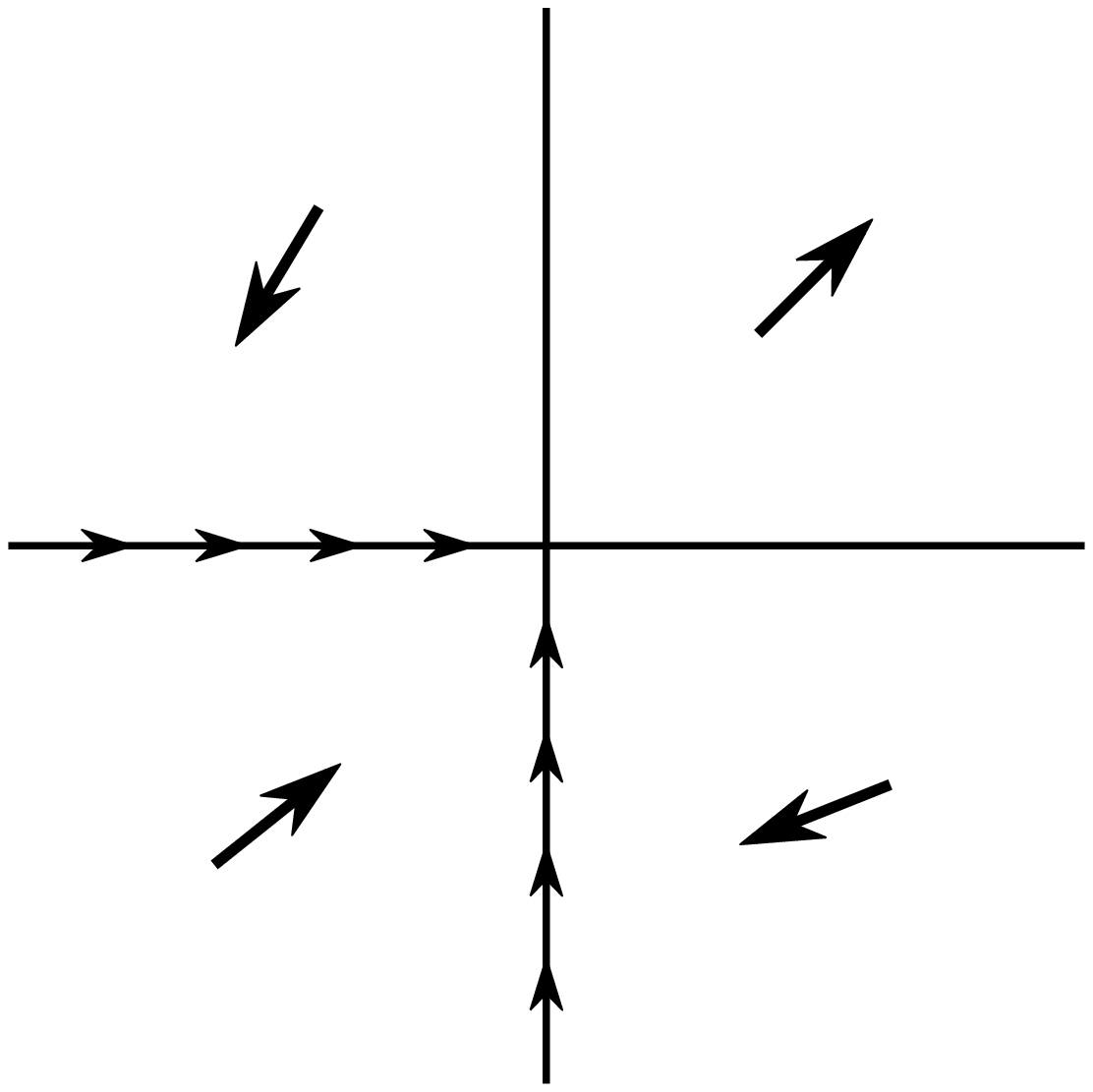}}
\put(10.4,0){\includegraphics[height=4.7cm]{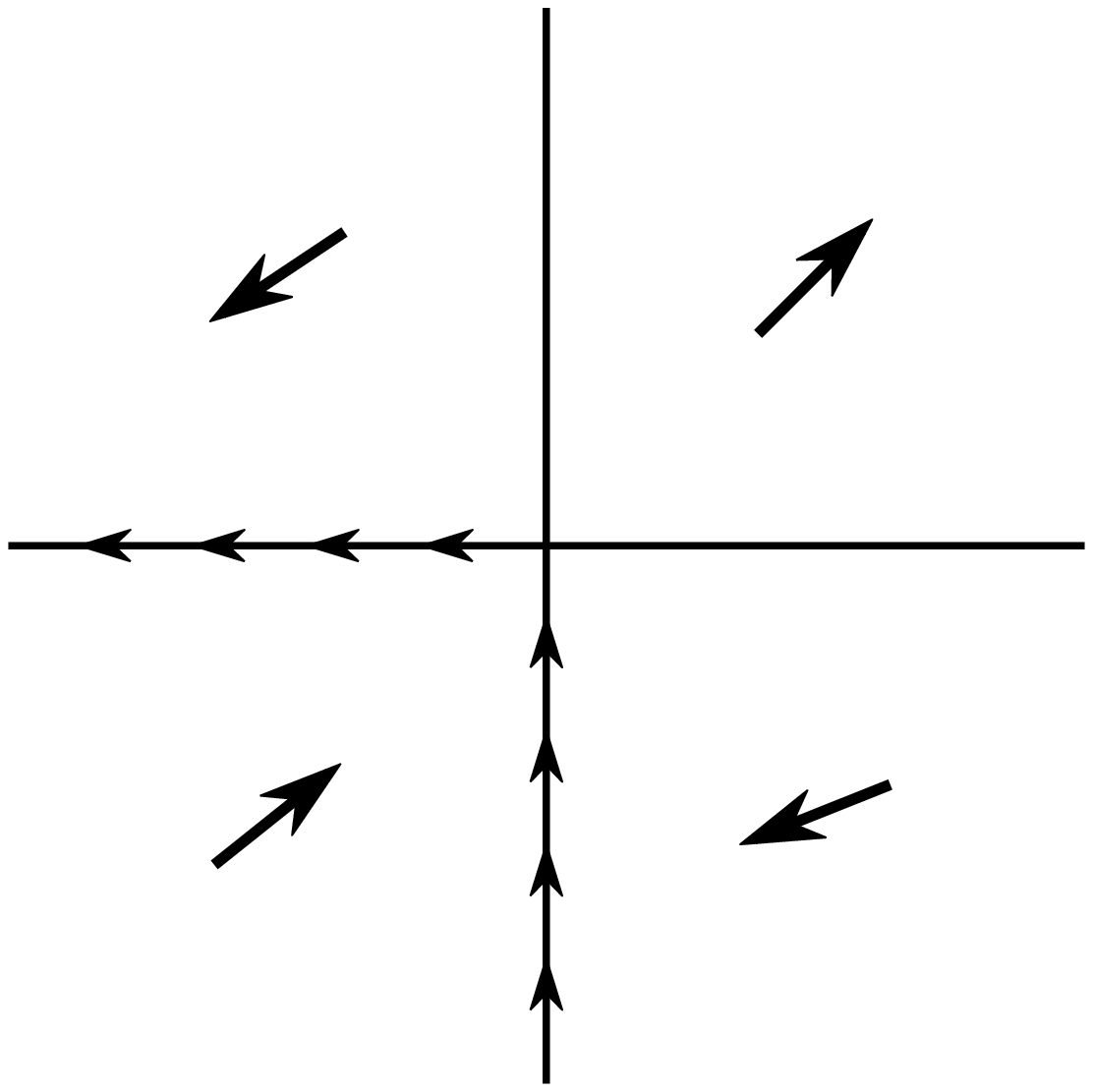}}
\put(.5,4.7){\large \sf \bfseries A}
\put(4.4,2.45){\small $x$}
\put(2.42,4.5){\small $y$}
\put(5.7,4.7){\large \sf \bfseries B}
\put(9.6,2.45){\small $x$}
\put(7.62,4.5){\small $y$}
\put(10.9,4.7){\large \sf \bfseries C}
\put(14.8,2.45){\small $x$}
\put(12.82,4.5){\small $y$}
\end{picture}
\caption{
Schematics of the two-dimensional piecewise-constant vector fields (\ref{eq:f2dPWC}) considered in
\sect{sub:chaos} (panel A), \sect{sub:stabilization} (panel B), and \sect{sub:exitSelection} (panel C).
Each quadrant has one arrow indicating the direction of flow.
The direction of Filippov's sliding solution is indicated on
the parts of the discontinuity surfaces ($x=0$ and $y=0$) that are attracting.
\label{fig:schemPWCVectorField}
} 
\end{center}
\end{figure}


We now illustrate hysteresic dynamics for piecewise-constant vector fields
in three cases for which one or more of the $\f fi$ is not directed inwards.
These are depicted by Fig.~\ref{fig:schemPWCVectorField}.

We first consider
\begin{equation}
\begin{aligned}
a_1 &= -1 \;, & a_2 &= 1 \;, & a_3 &= 1 \;, & a_4 &= -1 \;, \\
b_1 &= 1.2 \;, & b_2 &= -1.5 \;, & b_3 &= 0.5 \;, & b_4 &= 0.2 \;,
\end{aligned}
\label{eq:params36}
\end{equation}
see Fig.~\ref{fig:schemPWCVectorField}-A.
Here $\f f2$, $\f f3$ and $\f f4$ are directed inwards, but $\f f1$ is not.
Yet, for the unperturbed system (\ref{eq:f2dPWC}), the origin is a global attractor.
This is because the sliding dynamics on $x = 0$ with $y > 0$ approaches $(x,y) = (0,0)$
(specifically, the $y$-component of $\f f{12}$ is $\frac{a_2 b_1 - a_1 b_2}{a_2 - a_1} = -0.15$ which is negative-valued).

When the system is perturbed by hysteresis,
orbits repeatedly escape $\Omega$ (because $\f f1$ is not directed inwards)
but remain within some neighbourhood of the origin (because the origin is an attractor of (\ref{eq:f2dPWC})).
This is shown in Fig.~\ref{fig:circleMap36}-A.

Despite repeatedly escaping $\Omega$,
the map $q$ can be applied to this example without modification
and is shown in Fig.~\ref{fig:circleMap36}-B.
Unlike when each $\f fi$ is directed inwards,
here $q$ is discontinuous and neither one-to-one nor onto.

The third iterate of $q$ is also shown in Fig.~\ref{fig:circleMap36}-B.
For the given parameter values (\ref{eq:params36}), and more generally for an open set of parameter values about (\ref{eq:params36}),
there exists a trapping region within which the third iterate is given by a two-piece piecewise-linear function,
as shown in the inset.
This is a skew tent map with slopes $1$ and $\frac{(b_1/a_1)(b_3/a_3)}{(b_2/a_2)(b_4/a_4)}$.
With (\ref{eq:params36}) the latter slope is $-2$.
As described in \cite{MaMa93,NuYo95,DiBu08}, the dynamics is chaotic at these values.
Therefore, we have generated chaotic dynamics by incorporating hysteresis into the system.

\begin{figure}[h!]
\begin{center}
\setlength{\unitlength}{1cm}
\begin{picture}(15.17,5.8)
\put(0,0){\includegraphics[height=5.5cm]{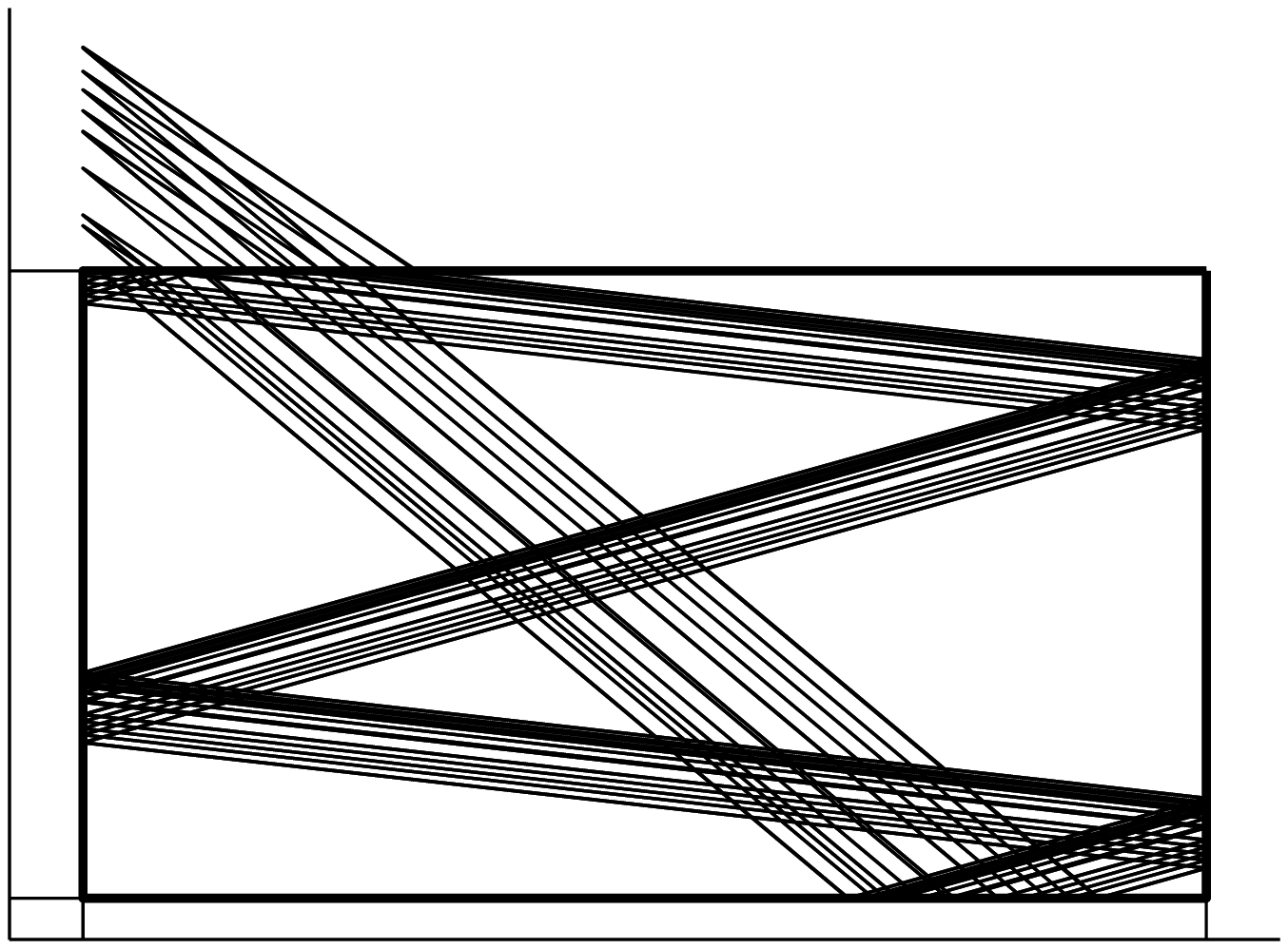}}
\put(7.83,0){\includegraphics[height=5.5cm]{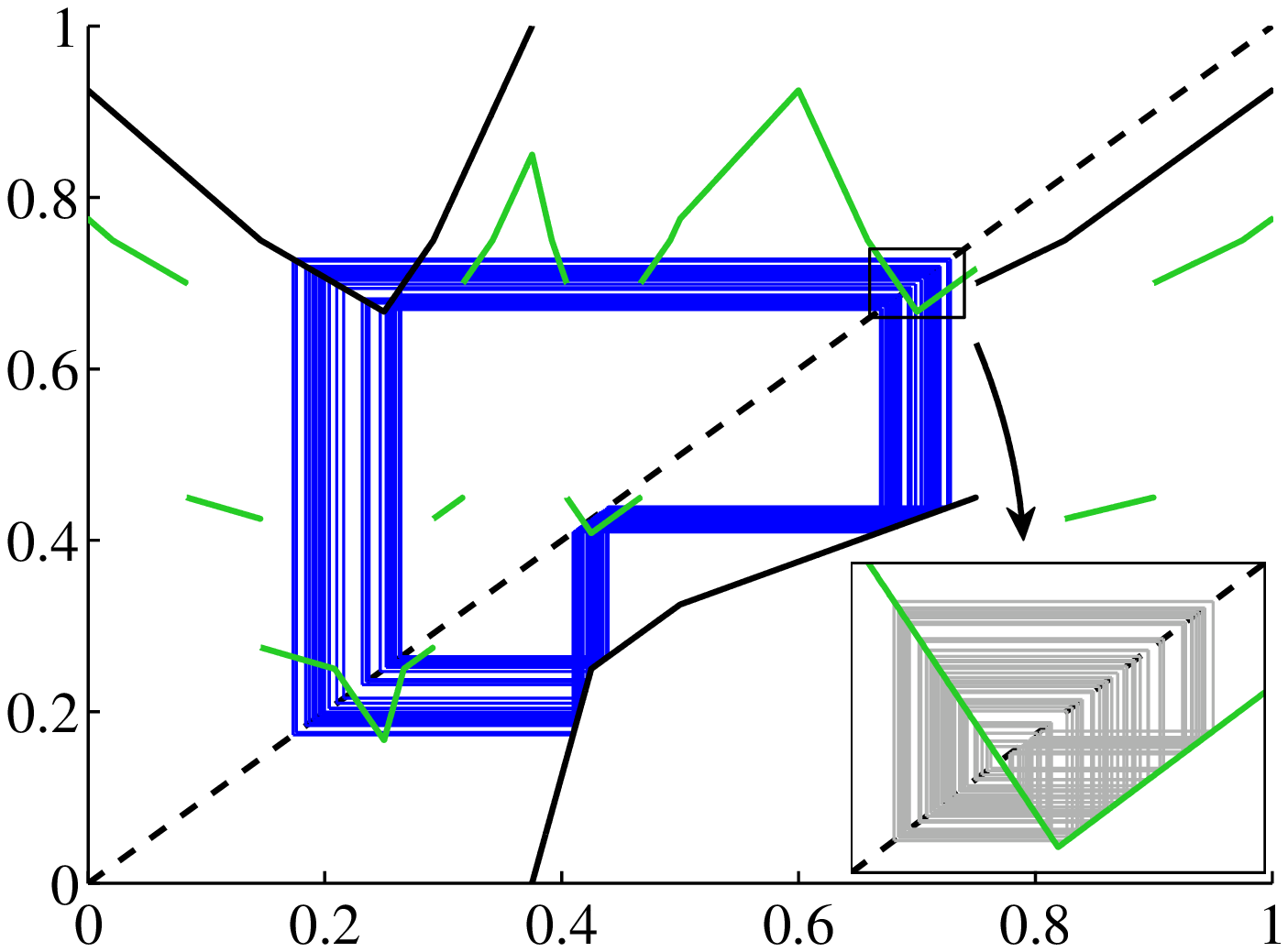}}
\put(.9,5.5){\large \sf \bfseries A}
\put(3.9,0){$x$}
\put(0,2.7){$y$}
\put(.66,.28){\small $-\alpha$}
\put(6.75,.28){\small $\alpha$}
\put(-.02,.66){\small $-\beta$}
\put(.28,3.96){\small $\beta$}
\put(8.73,5.5){\large \sf \bfseries B}
\put(11.73,0){$\eta$}
\put(11.01,5.1){\small $q$}
\put(12.64,4.8){\color{thirdIterate} \small $q^3$}
\end{picture}
\caption{
The chaotic attractor of (\ref{eq:f2dPWC}) with (\ref{eq:hysteresisRules}) and (\ref{eq:params36}).
Panel A shows part of an orbit in the $(x,y)$-plane;
panel B shows the circle map $q$ and its third iterate $q^3$.
\label{fig:circleMap36}
} 
\end{center}
\end{figure}

\subsection{Stabilization induced by hysteresis}
\label{sub:stabilization}


Here we consider (\ref{eq:f2dPWC}) with
\begin{equation}
\begin{aligned}
a_1 &= 1 \;, & a_2 &= -0.6 \;, & a_3 &= 1 \;, & a_4 &= -1 \;, \\
b_1 &= 1 \;, & b_2 &= -1 \;, & b_3 &= 0.8 \;, & b_4 &= -0.4 \;,
\end{aligned}
\label{eq:params50}
\end{equation}
see Fig.~\ref{fig:schemPWCVectorField}-B.
This a representative example, the exact values as given are not important and are only provided for clarity.
With these values $\f f1$ is directed outwards, hence $\Gamma$ is not an attracting sliding surface.
Yet the addition of hysteresis (\ref{eq:hysteresisRules}) causes orbits to become trapped near $\Gamma$.

To understand why this occurs, first notice that the values (\ref{eq:params50}) have been chosen
such that for the unperturbed system (\ref{eq:f2dPWC}),
on the negative $x$ and $y$-axes Filippov's sliding solution approaches the origin $(x,y) = (0,0)$.
Orbits thus approach the origin by either sliding along the negative $x$-axis,
sliding along the negative $y$-axis,
or regular motion in mode $3$. 

With the addition of hysteresis, such `approaching' dynamics involves
evolution with $x < \alpha$ and $y < \beta$ in modes $2$, $3$ and $4$.
But if an orbit is in mode $2$, since $\f f2$ points left and down,
the orbit cannot switch to mode $1$ by reaching $x = \alpha$, it
can only switch to mode $3$ by reaching $y = -\beta$.
Similarly, $\f f4$ points left and down
and so an orbit in mode $4$ can only switch to mode $3$.
If an orbit in mode $3$ reaches $y = \beta$ (with $x < \alpha$) it changes to mode $2$,
whilst if it reaches $y = \alpha$ (with $y < \beta$) it changes to mode $4$.
In the special case that it reaches $(x,y) = (\alpha,\beta)$ it changes to mode $1$ (and subsequently escapes).
Thus escape from a proximity to the origin requires passing through the point $(\alpha,\beta)$.
Hence, over any finite time interval, almost all orbits remain near the origin.
In this sense hysteresis stabilizes the unstable sliding surface $\Gamma$.


\subsection{Exit selection due to hysteresis}
\label{sub:exitSelection}

Lastly we consider
\begin{equation}
\begin{aligned}
a_1 &= 1 \;, & a_2 &= -1.5 \;, & a_3 &= 1 \;, & a_4 &= -1 \;, \\
b_1 &= 1 \;, & b_2 &= -1 \;, & b_3 &= 0.8 \;, & b_4 &= -0.4 \;,
\end{aligned}
\label{eq:params60}
\end{equation}
which is the same as (\ref{eq:params50}) except for the value of $a_2$, see Fig.~\ref{fig:schemPWCVectorField}-C.
Unlike the previous example, Filippov's sliding solution for (\ref{eq:f2dPWC}) on the negative
$x$-axis heads away from the origin.
Hence there are two routes by which orbits may travel away from the origin:
sliding motion along the negative $x$-axis
and regular motion in mode $1$.

In the presence of hysteresis, an orbit in mode $3$ eventually escapes proximity to the origin
by either switching about the negative $x$-axis or switching to mode $1$.
But as with the previous example, in order to switch to mode $1$ the orbit must pass through the point $(\alpha,\beta)$.
For this reason, almost all orbits escape along the negative $x$-axis instead of via mode $1$.

For the unperturbed system (\ref{eq:f2dPWC}), forward evolution from the origin is ambiguous.
But we can use the above observation to argue that, with the values (\ref{eq:params60}), forward evolution from the origin
should be given by Filippov's sliding solution along the negative $x$-axis.
That is, we dismiss the possibility of subsequent evolution in mode $1$.
Such exit selection has also been described for this situation by smoothing the vector field \cite{GuHa15,Je15e}.
It remains to formulate these ideas more generally.
This may have important consequences to the dynamics of gene networks, for example,
that involve switching at or near intersecting discontinuity surfaces \cite{EdGl14}.

\section{Other forms of regularisation: numerical experiments}
\label{sec:other}
\setcounter{equation}{0}


In this section we add time-delay, discretization and noise to the system (\ref{eq:f}).
As explained in \sect{sub:zeroPertLimit}, to determine the sliding solution
defined by taking the zero-perturbation limit in each case, we take the vector field $\v f$ to be piecewise-constant,
set the perturbation size to $\eps = 1$, and study attractors of the two-dimensional system (\ref{eq:f2dPWC}).
As shown in Fig.~\ref{fig:allBifDiag_z21}, the resulting sliding solution is jittery for time-delay and discretization, but not noise.
The three perturbations are more mathematically involved than hysteresis
and establishing rigorous results is beyond the scope of this paper.

\subsection{Time-delay}
\label{sub:delay}

Here we consider
\begin{equation}
\dot{\v x}(t) = \v f(\v x(t-\eps)) \;,
\label{eq:fTimeDelay}
\end{equation}
where $\eps > 0$ represents a constant time-delay.
This scenario was also considered briefly in \cite{AsIz89}.
One could more generally implement different size delays for the two switching conditions, but a single delay is sufficient here.

\begin{figure}[h!]
\begin{center}
\setlength{\unitlength}{1cm}
\begin{picture}(15.17,5.8)
\put(0,0){\includegraphics[height=5.5cm]{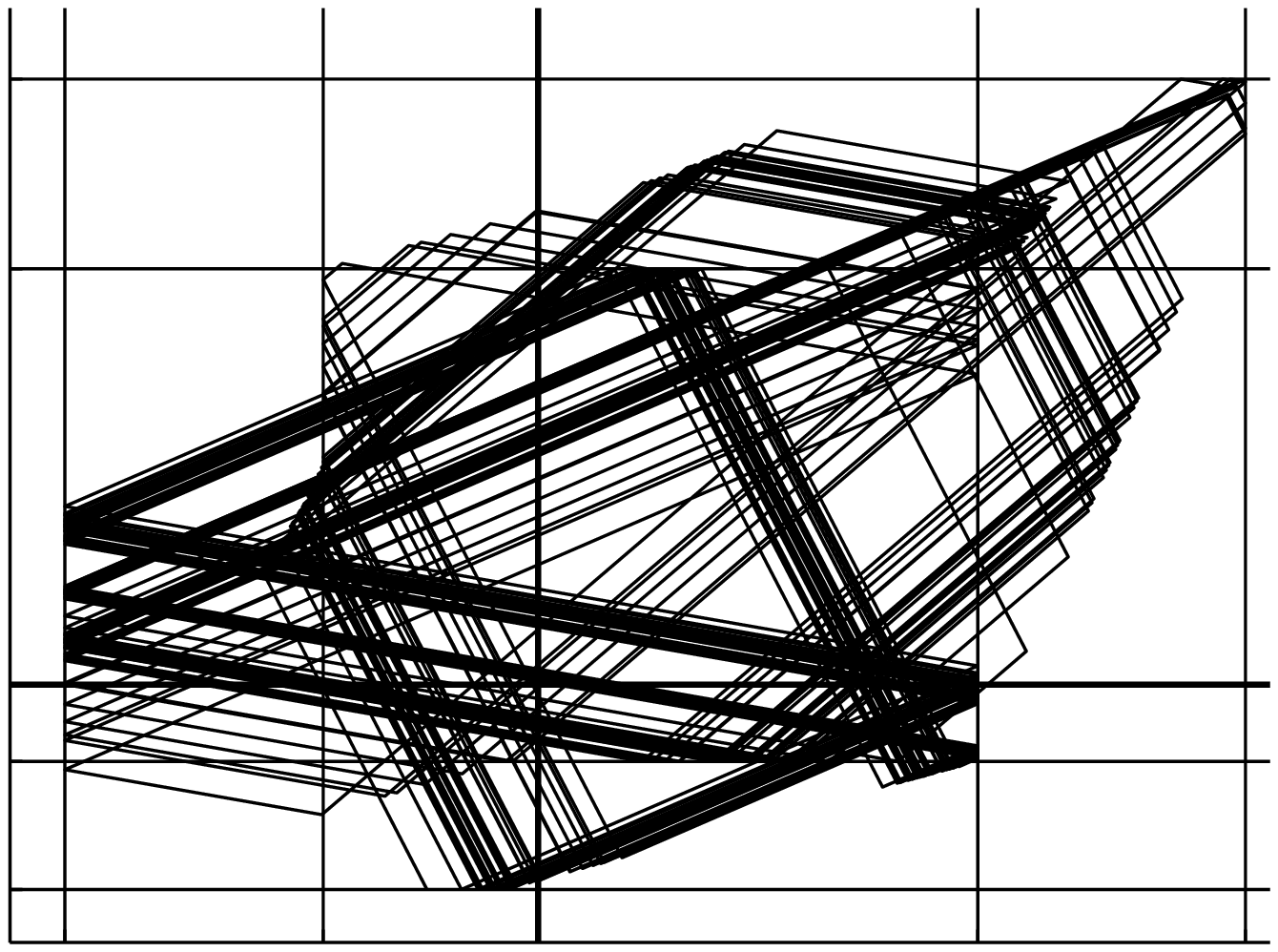}}
\put(7.83,0){\includegraphics[height=5.5cm]{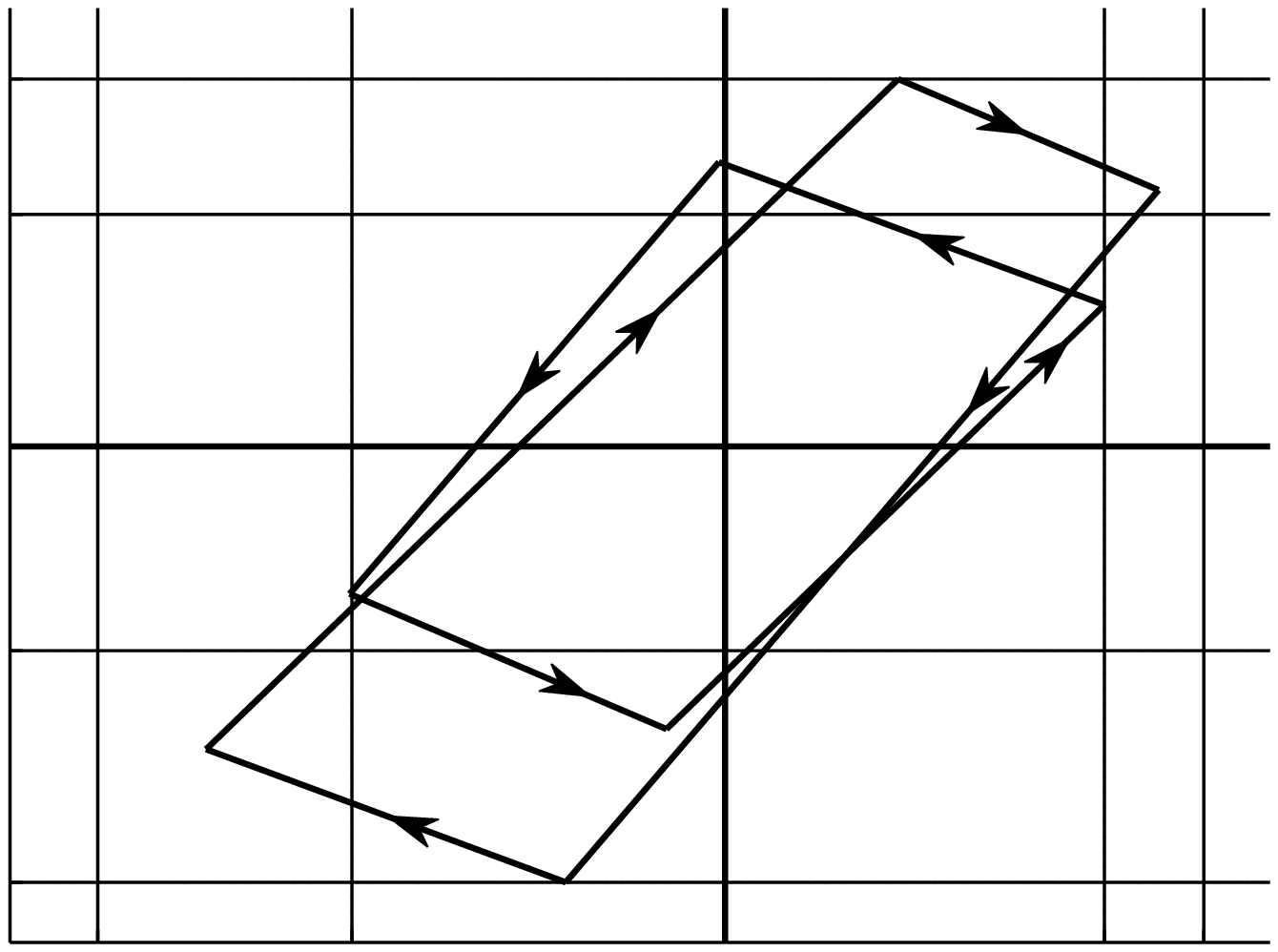}}
\put(.78,5.5){\large \sf \bfseries A}
\put(4,0){$x$}
\put(0,3){$y$}
\put(.82,.32){\footnotesize $a_1$}
\put(2.18,.32){\footnotesize $a_4$}
\put(3.34,.26){\footnotesize $0$}
\put(5.62,.32){\footnotesize $a_2$}
\put(7.03,.32){\footnotesize $a_3$}
\put(.31,.78){\footnotesize $b_1$}
\put(.31,1.43){\footnotesize $b_2$}
\put(.42,1.81){\footnotesize $0$}
\put(.31,4.04){\footnotesize $b_4$}
\put(.31,5.03){\footnotesize $b_3$}
\put(8.61,5.5){\large \sf \bfseries B}
\put(11.83,0){$x$}
\put(7.83,3){$y$}
\put(8.82,.32){\footnotesize $a_4$}
\put(10.16,.32){\footnotesize $a_1$}
\put(12.15,.26){\footnotesize $0$}
\put(14.13,.32){\footnotesize $a_3$}
\put(14.67,.32){\footnotesize $a_2$}
\put(8.14,.81){\footnotesize $b_1$}
\put(8.14,2.02){\footnotesize $b_2$}
\put(8.25,3.05){\footnotesize $0$}
\put(8.14,4.3){\footnotesize $b_4$}
\put(8.14,5.01){\footnotesize $b_3$}
\put(11.01,3.53){\scriptsize $1$}
\put(11.32,1.62){\scriptsize $2$}
\put(14.04,3.45){\scriptsize $3$}
\put(13.11,4.02){\scriptsize $4$}
\put(13.35,3.41){\scriptsize $1$}
\put(13.6,4.59){\scriptsize $2$}
\put(11.9,3.64){\scriptsize $3$}
\put(10.46,1.28){\scriptsize $4$}
\end{picture}
\caption{
Attractors of the two-dimensional piecewise-constant system (\ref{eq:f2dPWC}) with $\eps = 1$ time-delay
for the random example (\ref{eq:fghLinearInt})-(\ref{eq:abc1})
with $z = 0$ in panel A and $z = 0.9$ in panel B.
In panel B there are two attracting periodic orbits.
The different parts of these orbits are labelled by mode.
\label{fig:timeDelayPhasePortrait21}
} 
\end{center}
\end{figure}

With each $\f fi$ directed inwards, orbits of (\ref{eq:f2dPWC}) with (\ref{eq:fTimeDelay}) become trapped in a neighbourhood of the origin.
This is shown for two examples in Fig.~\ref{fig:timeDelayPhasePortrait21}.
The orbits change mode whenever $x(t-\eps) = 0$ or $y(t-\eps) = 0$ (with $\eps = 1$).
This occurs on the eight lines $x = \ff ai$ and $y = \ff bi$, for $i = 1,\ldots,4$,
as well as at other points in cases for which both $x = 0$ and $y = 0$ are crossed in a time less than $\eps$.

We obtained the zero-time-delay sliding solution shown in Fig.~\ref{fig:allBifDiag_z21}
by computing the forward orbit of the origin in mode $1$, and removing transient dynamics,
in order to identify an attractor of the system.
As with hysteresis, we observed that typically the attractor is periodic.
With $0.74 < z < 0.83$, approximately, the attractor does not involve mode $4$.
In this interval the sliding solution therefore has $\gamma_4 = 0$ and lies on the boundary of the convex hull.

Unlike with hysteresis, the time-delayed system can have multiple attractors
and hence multiple sliding solutions in the zero-perturbation limit.
With $z = 0.9$, for example, we have identified two distinct attractors, Fig.~\ref{fig:timeDelayPhasePortrait21}-B.
The presence of multiple attractors could allow for interesting (e.g.~periodic) sliding dynamics on $\Gamma$.
This is because the attractors undergo bifurcations as the value of $z$ is varied
and therefore sliding orbits could switch between different sliding solutions as these bifurcation values are reached.
Such complexities are left for future work.

\subsection{Numerical discretisation}
\label{sub:discrete}

Arguably the simplest and most direct manner by which to
numerically compute orbits to systems of discontinuous differential equations
is to use a numerical method with no special attention paid to the discontinuity surfaces.
Regardless of theoretical results on the nature of solutions to discontinuous differential equations,
it is highly important to understand how such numerical solutions behave
as these are the solutions that applied scientists would most naturally obtain.

Here we consider forward Euler with step-size $\eps$
\begin{equation}
\v x_{i+1} = \v x_i + \eps \v f(\v x_i) \;.
\label{eq:forwardEuler}
\end{equation}
Note that for discontinuous differential equations some methods, such as backward Euler, are ill-posed.

About a single attracting discontinuity surface,
(\ref{eq:forwardEuler}) generates a solution that rapidly switches back-and-forth across the surface.
As $\eps \to 0$ this solution converges to Filippov's solution, see for instance \cite{DoLe92}.

Fig.~\ref{fig:allBifDiag_z21} shows our result using (\ref{eq:forwardEuler})
to generate a sliding solution for the system (\ref{eq:f}) with (\ref{eq:fghLinearInt})-(\ref{eq:abc1}).
To achieve this, for each value of $z \in [0,1]$ (we used values spaced by $\Delta z = 0.001$),
we constructed the two-dimensional piecewise-constant system (\ref{eq:f2dPWC}).
We then computed the forward orbit of the origin in mode $1$
for (\ref{eq:f2dPWC}) using (\ref{eq:forwardEuler}) (with $\eps = 1$ due to the scaling invariance)
for $10^5$ time-steps.
We then used this orbit, with transient dynamics removed, to evaluate (\ref{eq:hSlide})
where each $\gamma_i$ is the fraction of points of the orbit that lie in ${\cal Q}_i$,
and this way produced Fig.~\ref{fig:allBifDiag_z21}.
We notice that $h_{\rm slide}$ is a highly erratic function of $z$
thus producing a jittery sliding solution.

The system (\ref{eq:f2dPWC}) with (\ref{eq:forwardEuler}) is a two-dimensional piecewise-smooth map.
Moreover, the map is discontinous and each of the four smooth pieces of the map is a translation.
With each $\f fi$ directed inwards, all forward orbits become trapped in a neighbourhood of the origin.
The forwards orbits are typically aperiodic and evenly fill a dense subset of some patterned region, call it $\Psi$.
Fig.~\ref{fig:timeDiscPhasePortrait21} shows $\Psi$ for two examples.

Our numerical investigations reveal that $\Psi$ is often, but not always, unique.
In this case $\Psi$ appears to be the closure of the $\omega$-limit set of every initial point in the $(x,y)$-plane.
The $\gamma_i$ 
are then given by the fraction of $\Psi$ contained in each quadrant of the $(x,y)$-plane.
The region $\Psi$ can change smoothly with $z$,
but also undergo fundamental changes due to interactions with $x=0$ and $y=0$.
A further study of the properties of $\Psi$ remains for future work.

\begin{figure}[h!]
\begin{center}
\setlength{\unitlength}{1cm}
\begin{picture}(15.17,5.8)
\put(0,0){\includegraphics[height=5.5cm]{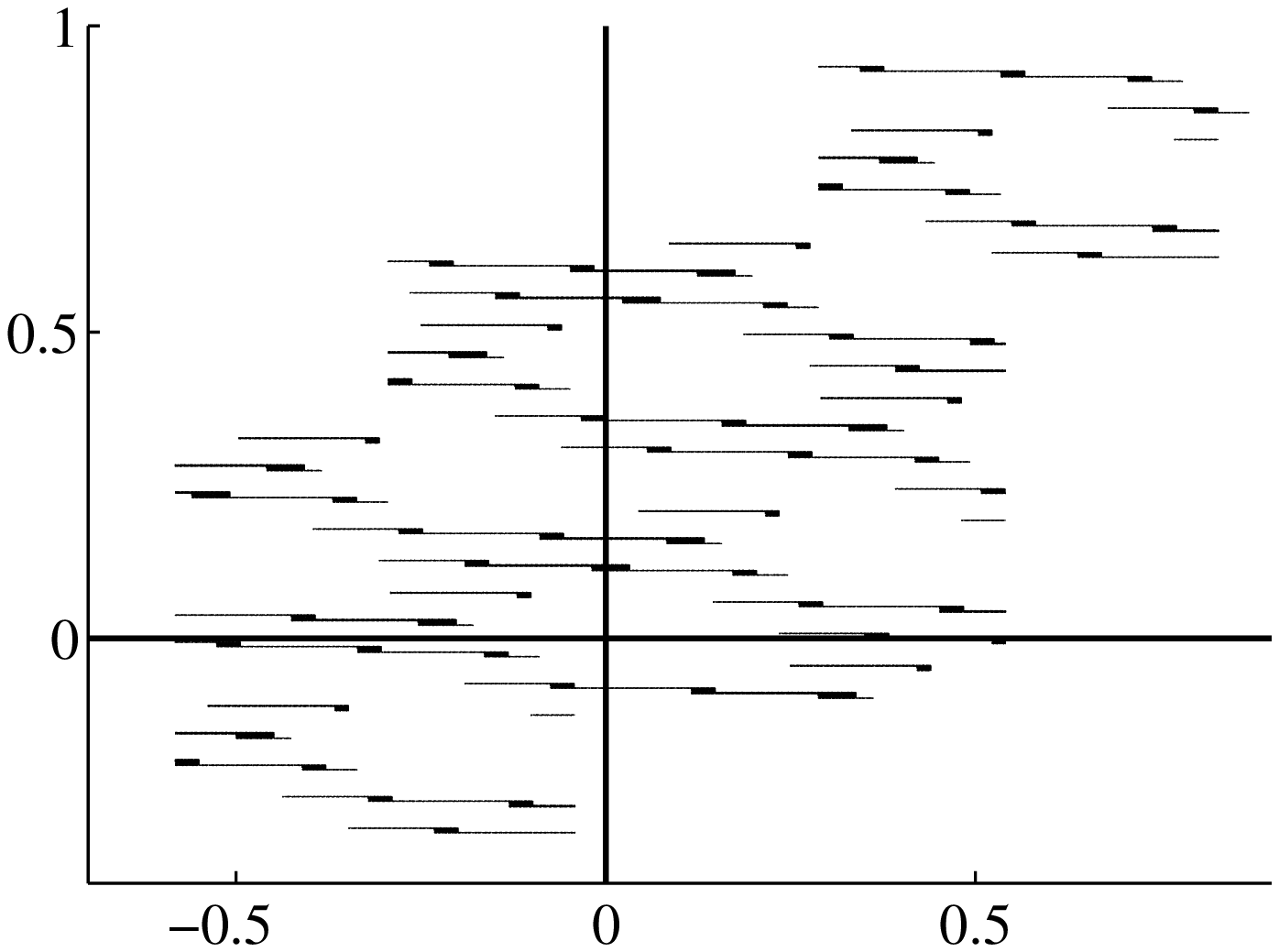}}
\put(7.83,0){\includegraphics[height=5.5cm]{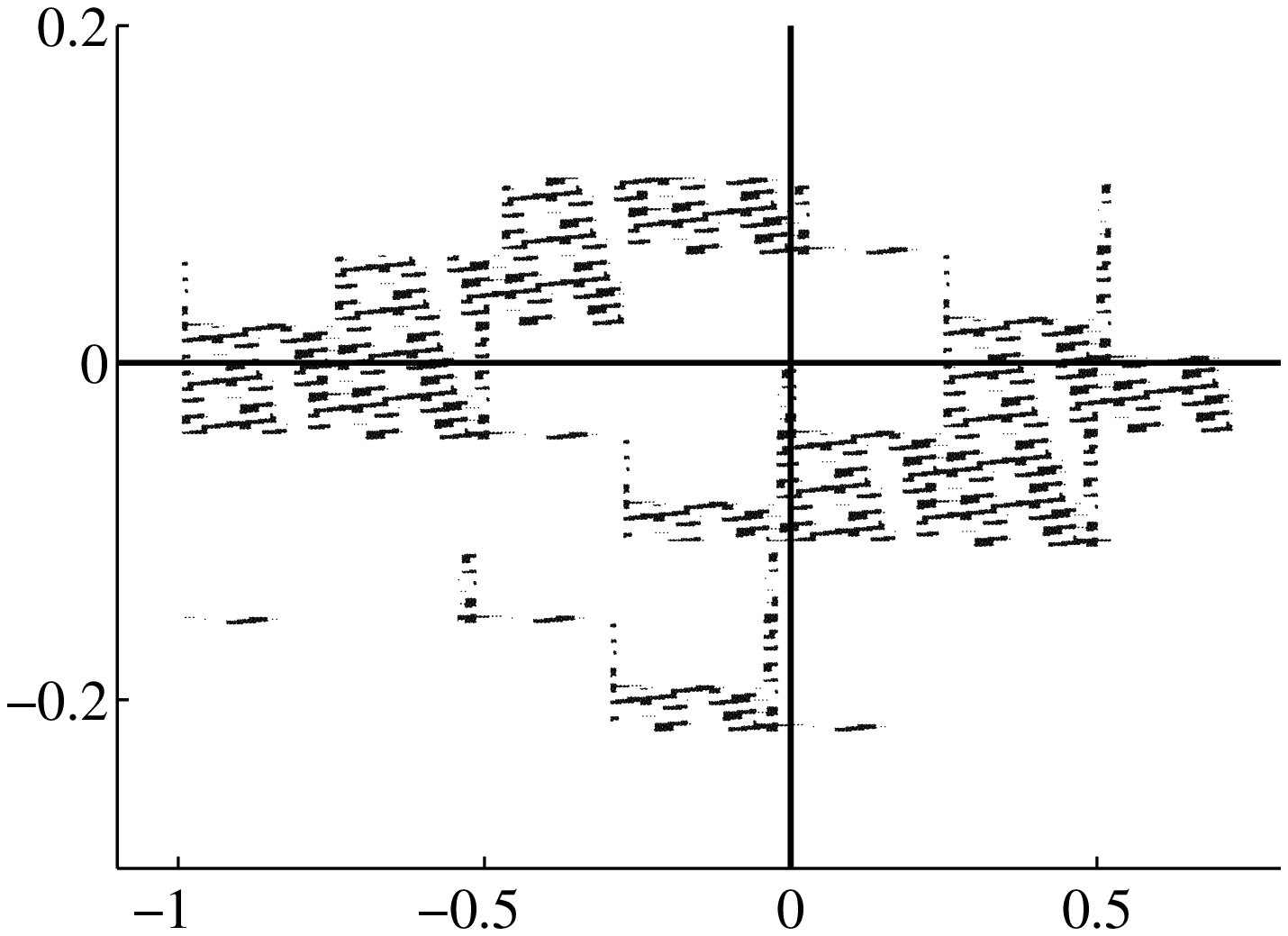}}
\put(.9,5.5){\large \sf \bfseries A}
\put(4,0){$x$}
\put(0,3){$y$}
\put(8.73,5.5){\large \sf \bfseries B}
\put(11.83,0){$x$}
\put(7.83,3){$y$}
\end{picture}
\caption{
Attractors $\Psi$ of the two-dimensional piecewise-constant system (\ref{eq:f2dPWC})
discretised by forward Euler (\ref{eq:forwardEuler}) with $\eps = 1$
for the random example (\ref{eq:fghLinearInt})-(\ref{eq:abc1}).
In panel A, $z = 0$; in panel B, $z = 1$.
\label{fig:timeDiscPhasePortrait21}
} 
\end{center}
\end{figure}

\subsection{Noise}
\label{sub:noise}

Here we add noise to define a sliding solution on $\Gamma$.
This was done for some simple examples exhibiting symmetry in \cite{AsIz89}.
The idea of using noise to resolve an ambiguity in forward evolution has been used previously for
non-Lipschitz points of continuous vector fields \cite{BaBa82,FlLa08,BoSu10},
as well as two-folds of discontinuous vector fields \cite{Si14c,SiJe16}.

We consider
\begin{equation}
d\v x(t) = \v f(\v x(t)) \,dt + \eps D \,d\v W(t) \;,
\label{eq:sde}
\end{equation}
where $\v W(t)$ is a standard two-dimensional vector Brownian motion,
and $D$ is a non-singular matrix that allows for different noise magnitudes in different directions..
As described above, in order to determine the value of $h_{\rm slide}$ in the $\eps \to 0$ limit,
we work with (\ref{eq:f2dPWC}) and can set $\eps = 1$.

Despite the discontinuities in $\v f$,
the system (\ref{eq:f2dPWC}) with (\ref{eq:sde}) has a unique stochastic solution \cite{StVa69,KrRo05}.
To produce the zero-noise solution shown in Fig.~\ref{fig:allBifDiag_z21},
for each value of $z$ we computed a sample solution to (\ref{eq:f2dPWC}) with (\ref{eq:sde}) and $D = I$
using the Euler-Maruyama method,
let $\gamma_i$ be the fraction of time that the orbit spent in ${\cal Q}_i$, for each $i$,
and evaluated (\ref{eq:hSlide}).
This sliding solution appears to be a smooth function of $z$,
thus not displaying jitter.

To further understand the origin of this sliding solution,
let $p_{\rm trans}(\v x,t;\v x_0)$ denote the transitional
probability density function for (\ref{eq:f2dPWC}) with (\ref{eq:sde}).
That is, given $\v x(0) = \v x_0$,
for any measurable subset $E \subset \mathbb{R}^2$ and any $t > 0$,
the probability that $\v x(t) \in E$ is $\int_E p_{\rm trans}(\v x,t;\v x_0) \,d \v x$.
If each $\f fi$ is directed inwards, then $p_{\rm trans}$ converges to
a steady-state density $p(\v x)$ as $t \to \infty$, see Fig.~\ref{fig:noiseSteadyState21}.
Since (\ref{eq:f2dPWC}) with (\ref{eq:sde}) is ergodic \cite{Sk89,Kh10,CaBa15},
the fraction of time $\gamma_i$ spent in ${\cal Q}_i$
is equal to the spatial fraction of $p$ over ${\cal Q}_i$.
That is,
\begin{equation}
\gamma_i = \int_{{\cal Q}_i} p(\v x) \,d \v x \;, \qquad i = 1,\ldots,4 \;.
\end{equation}

Finally, we formulate a boundary value problem for $p$.
It is a steady-state solution to the Fokker-Planck equation
of (\ref{eq:f2dPWC}) with (\ref{eq:sde}), that is
\begin{equation}
-\nabla \cdot (\v f p) + \frac{\eps^2}{2} \nabla^{\sf T} D D^{\sf T} \nabla p = 0 \;, \qquad x \ne 0 \;, y \ne 0 \;.
\end{equation}
Along $x=0$ and $y=0$ the density $p$ is continuous
and the `flow of probability' across these boundaries is the same on each side.
That is, the left and right limiting values of the probability current
\begin{equation}
J = \v n^{\sf T} \left( \v f p - \frac{\eps^2}{2} D D^{\sf T} \nabla p \right) \;,
\end{equation}
are equal, where $\v n$ is a unit normal vector to the boundary.
This condition specifies the jump in the derivative of $p$ at $x = 0$ and $y = 0$.
Also $p \to 0$ as $|\v x| \to \infty$.

Despite having a piecewise-constant drift vector $\v f$ and a constant diffusion matrix $D$,
we have been unable to obtain an analytical solution to this boundary value problem.
This problem remains for future work.

\begin{figure}[h!]
\begin{center}
\setlength{\unitlength}{1cm}
\begin{picture}(15.17,5.8)
\put(0,0){\includegraphics[height=5.5cm]{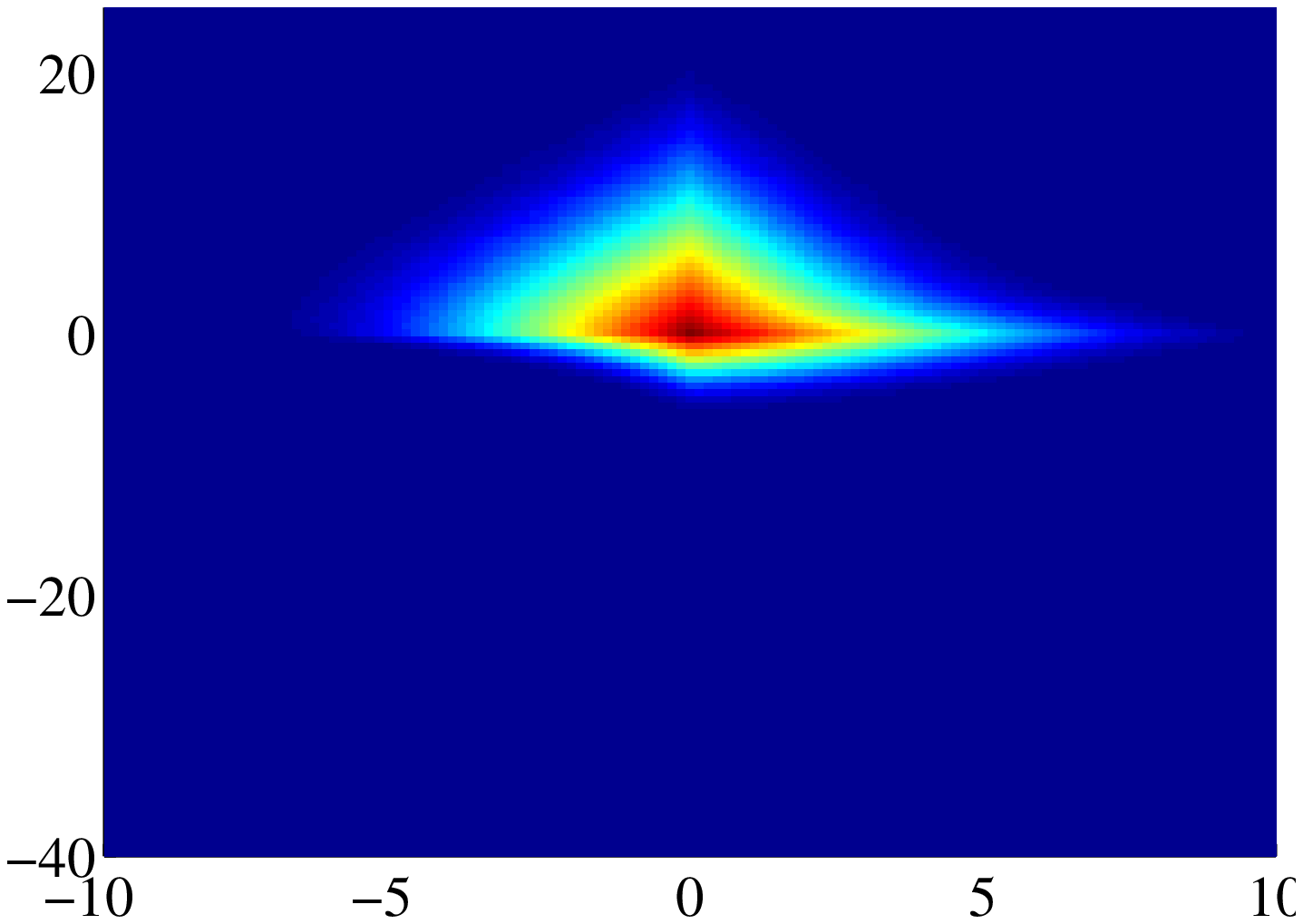}}
\put(7.83,0){\includegraphics[height=5.5cm]{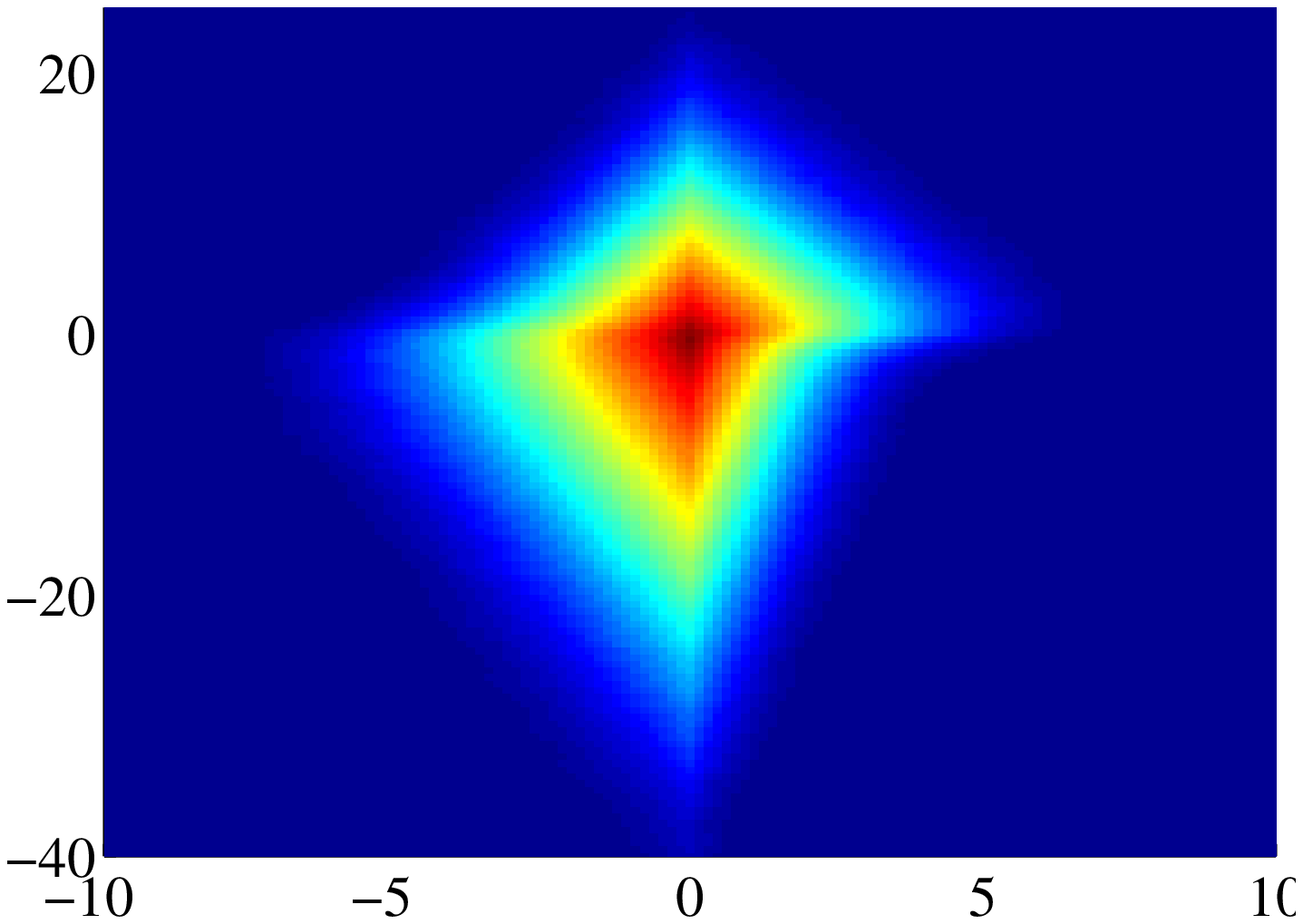}}
\put(.9,5.5){\large \sf \bfseries A}
\put(4,0){$x$}
\put(0,3){$y$}
\put(8.73,5.5){\large \sf \bfseries B}
\put(11.83,0){$x$}
\put(7.83,3){$y$}
\end{picture}
\caption{
Steady state probability density functions for the
two-dimensional piecewise-constant system (\ref{eq:f2dPWC})
perturbed by noise (\ref{eq:sde}) using $D = I$ and $\eps = 1$
for the random example (\ref{eq:fghLinearInt})-(\ref{eq:abc1}).
In panel A, $z = 0$; in panel B, $z = 1$.
The value of the density is indicated by shading
(dark red --- maximum value, dark blue --- zero) [color online].
\label{fig:noiseSteadyState21}
} 
\end{center}
\end{figure}

\section{An applied example - interfacing energy sources with an inductor}
\label{sec:eg}
\setcounter{equation}{0}

Power electronics use electrical switches to regulate electrical energy conversion. 
The physical switches used in virtually all modern implementations of power electronics are solid state semiconductor devices. Rapid advancements in the performance of these has lead to significant improvements in performance and efficiency. As a result, electrical power is becoming almost the universal means to transfer power in engineering applications. Power electronics using discontinuous switching \cite{Utkin1993,Lai2008} perform the electrical energy conversion that is necessary to interface machinery with electrical power transfer systems.

In a simple example we investigate how an inductor, two capacitors, and electrical switches can be used to interface four electrical energy sources in a theoretically lossless manner. This device is a basic example of a two-switch topology (e.g. \cite{Sira-Ramirez1987}), and because of the independence of the switches, is found to exhibit jitter. 

We assume that there are two voltage sources, $E_{1}$ and $E_{2}$, and two current sources, $I_{1}$ and $I_{2}$. There are four possible configurations of the system that are selected according to two independent switching surfaces. The equations describing the system are 
\begin{align}\label{circuit}
	L \frac{di_{L}}{dt} &= - (1-u_{1}) (1-u_{2}) v_{C_{1}} - (1-u_{1}) u_{2} E_{1} + u_{1} (1-u_{2}) E_{2} + u_{1} u_{2} v_{C_{2}} \\
	C_{1} \frac{dv_{C_{1}}}{dt} &= (1-u_{1}) (1-u_{2}) i_{L} - (1-u_{2}) I_{1} + u_{2} I_{2} \\
	C_{2} \frac{dv_{C_{2}}}{dt} &= - u_{1} u_{2} i_{L} + u_{2} I_{1} - (1-u_{2}) I_{2}
\end{align}
where $u_1$ and $u_2$ take the values $0$ or $1$. If these are chosen as
\begin{align}
	u_{1} &= \step(i_{L}^{*}-i_{L}) & u_{2} &= \step(v_{C_1}^{*}- v_{C_1} ),
\end{align}
in terms of the Heaviside step function with $\step(h)=0$ for $h<0$ and $\step(h)=1$ for $h>0$, 
then the values of $i_{L}$ and $v_{C_{1}}$ are controllable provided $i_{L},i_{L}^{*} < I_{1}$. 
An example of such a switched mode circuit with single-pole double-through switches is shown in Fig. \ref{Fig:InductiveTransfer}. 

Letting $x=(i_{L}-i_{L}^*)/A$, $y=(v_{C_1}-v_{C_1}^{*})/V$, $z=v_{C_2}/V$, where $A$ and $V$ denote the units of current and voltage, and taking physically reasonable parameter values $L=1.5mH$, $C_1=1.6mF$, $C_2=1.2mF$, $i_l^*=3A$, $v_C^*=10V$, $E_1=12V$, $E_2=10V$, $I_1=6A$, $I_2=4A$, the four modes become 
\begin{equation}
(\dot x,\dot y,\dot z) = 10^{3} \times \begin{cases}
(-2(y+10)/3,\;5(x-3)/8,\;-10/3) \;, & (x,y) \in {\cal Q}_1 \;, \\
(20/3,\;-15/4,\;-10/3) \;, & (x,y) \in {\cal Q}_2 \;, \\
(2z/3,\;5/2,\;5(3-x)/6) \;, & (x,y) \in {\cal Q}_3 \;, \\
(-8,\;5/2,\;5) \;, & (x,y) \in {\cal Q}_4 \;.
\end{cases}
\end{equation}

We take hysteresis bounds $\alpha=\beta=\eps/\sqrt2$ (so $\phi=1/2$ in the notation of (\ref{eq:alphabeta})). The simulations below are obtained approximating $x\approx y\approx0$, which is valid for sufficiently small $\eps$. In these simulations we use $\eps=\sqrt2/10$ or $\eps=\sqrt2/10^3$ (so that $\alpha=\beta=0.1$ or $0.001$). 

%

\begin{figure}[h!]
	\begin{center}
		\includegraphics[width=0.65\textwidth]{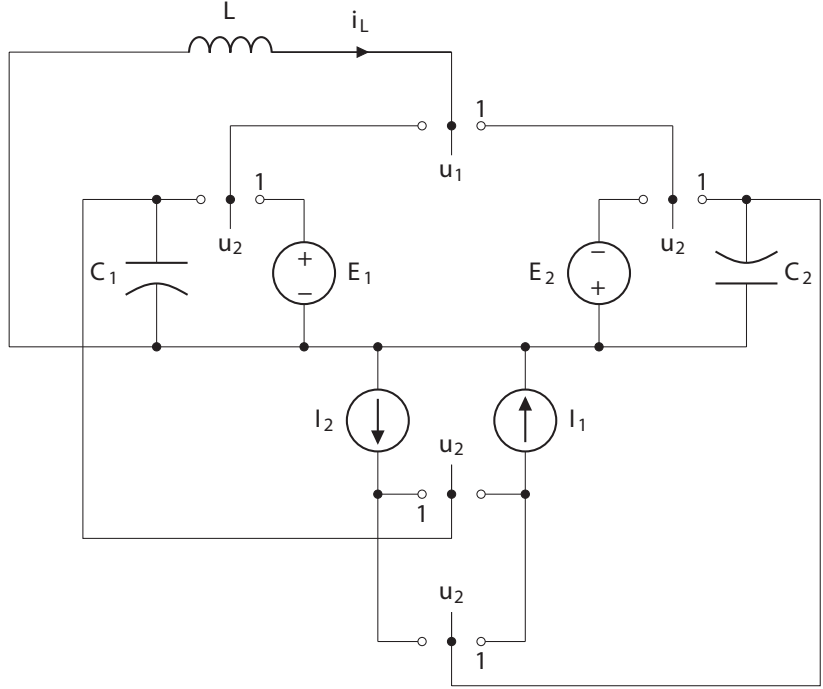}
      	\caption{Power transfer between sources using a switch network and passive elements. The variable controlling each switch $(u_{1}, u_{2})$ is shown above the switch itself; the position that the switch takes for input $1$ is also marked. For input $0$ the switch takes the complementary position.}
      	\label{Fig:InductiveTransfer}
	\end{center}
\end{figure}	

\begin{figure}[h!]
	\begin{center}
		\includegraphics[width=0.6\textwidth]{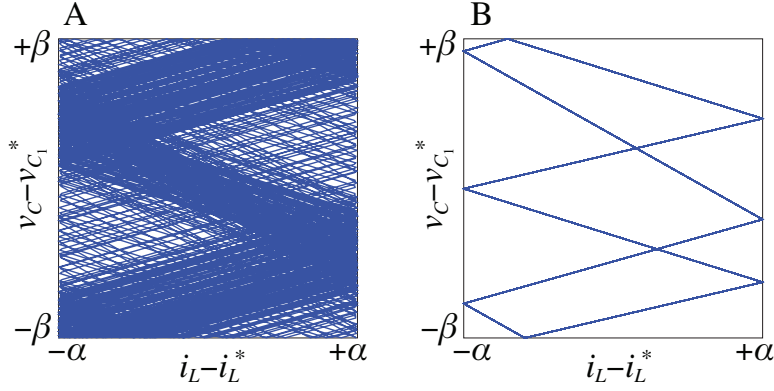}
      	\caption{Attractors inside the $\varepsilon$-hysteresis chatterbox at $z=v_{C_2}=13$ (panel A) and $z=v_{C_2}=16$ (panel B), with $\alpha=\beta=0.1$. 
}
      	\label{Fig:circjitter}
	\end{center}
\end{figure}	

Figure \ref{Fig:circjitter} shows two examples of the attractor between the hysteresis boundaries at fixed values of $z$. 
Figure \ref{Fig:circsim} shows a simulation of the system's orbit found by solving the full system of equations (\ref{circuit}). The effect of jitter is observable as a marked change in the gradient, particularly around $t\approx0.032s$. Two simulations for different hysteresis widths $\eps$ are shown, with similar results. The gradient, corresponding to the speed $\dot z=h_{\rm slide}$, can be calculated numerically by taking the gradient of this graph, shown in panel B by the green dotted and red dashed curves for the two curves in panel A. This is compared in panel B to the theoretical sliding speed, found by iterating the hysteretic map inside the chatterbox at fixed $z$ (thin blue curve), which corresponds to taking an ideal limit $\eps=0$, and which is almost indistinguishable from the $\eps=\sqrt2/10^3$ simulation. Also shown in Panel B are the canopy (black curve) and the hull (unshaded region). 
\begin{figure}[h!]
	\begin{center}
		\includegraphics[width=0.95\textwidth]{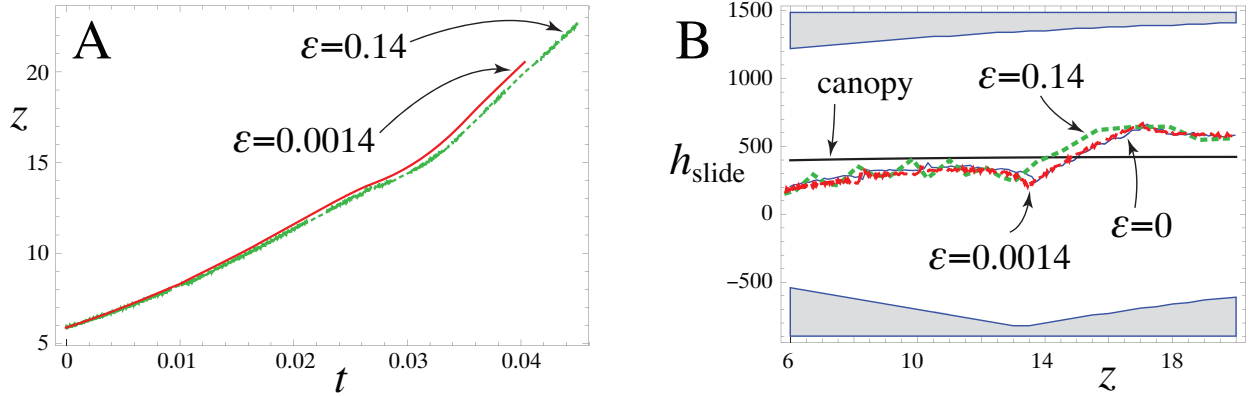}
      	\caption{Simulation of jitter in the circuit model for the parameters in Fig. \ref{Fig:circjitter}. Panel A shows a solution of the system (\ref{circuit}) with $\eps=\sqrt2/10^3$ (full curve) and $\eps=\sqrt2/10$ (dotted curve). Panel B shows the speed along the switching intersection, calculated from the gradient of the full simulation for $\eps=\sqrt2/10^3$ (dashed green curve) and $\eps=\sqrt2/10$ (dotted red curve), and the ideal $\eps=0$ (thin blue curve) calculated by iterating the hysteretic map at fixed $z$. The $\eps=\sqrt2/10^3$ and $\eps=0$ curves are almost indistinguishable. The canopy from (\ref{eq:canopy}) is also shown, and the unshaded region indicates the convex hull. 
}
      	\label{Fig:circsim}
	\end{center}
\end{figure}	

The results agree with the predicted theory, showing jitter in the sliding vector field, and here we see how this affects also the system's orbit. The deviation in the orbit itself is only slight here over the small timescale shown, for physically realistic parameter values, but shows a marked effect in the gradient. This explores a significant portion of the convex hull, agreeing with the theoretical sliding motion given by the attractor in the chatterbox, deviating significantly from the canopy. 

In the simulation with $\eps=\sqrt2/10$, the switching frequency for $u_1$ is found to be more variable than the switching frequency of $u_2$, but for both it remains in a range between $5 kHz$ and $25 kHz$, which are within the operating range of widely used electronic sensors and semiconductor switches.


\section{Closing remarks}
\label{sec:conc}
\setcounter{equation}{0}

The phenomenon of jitter along the intersection of multiple switches is a consequence of the rich dynamics that arises when switches are not ideal discontinuities, but instead involve elements of hysteresis, time-delay, or discretization. 
If we can assume that each switching multiplier or `duty ratio' $\lambda_i$ 
is determined independently of the others, then their values are given by the canopy combination. This appears to be a good approximation for a system where the switch is a limit of a smooth sigmoid function which becomes infinitely steep, or of a noisy switch as the noise amplitude tends to zero. The canopy appears to be a poor approximation when hysteresis, time-delay, or discretization dominate the dynamics of switching, when instead the system evolves onto an attractor that determines the dynamics (and the $\lambda_i$'s), an attractor whose identity is sensitive to parameters of the vector fields and the switching model. 

The attractor observed in the presence of hysteresis, time-delay, or discretization, is not obvious a priori from the vector fields, but is the solution of a continuous piecewise-differentiable circle map in the case of hysteresis (and more complicated maps in the cases of time-delay and discretization). The attractor undergoes bifurcations as parameters of the system are varied. These parameters may belong to the vector fields themselves, or to the regularisation. The bifurcations cause abrupt jumps in the attractor, resulting in abrupt jumps or `jitter' of the dynamics along an intersection of switches. The investigation in \cite{AlSe99}, which seems to be not yet widely known, showed that the sliding speed depended on the precise attractor at the intersection, the solution of a circle map, and we have added to the explanation and details of the phenomenon here, particularly its observable effect on the dynamics. The effect would appear to be highly significant 
both for theoretical and practical problems involving interactions between two or more switches. 

\section*{Bibliography}

\providecommand{\newblock}{}


\end{document}